\newtheorem{theorem}{Theorem}[section]
\newtheorem{lemma}{Lemma}[section]
\newtheorem{corollary}{Corollary}[section]
\newtheorem{proposition}{Proposition}[section]
\newtheorem{remark}{Remark}
\newenvironment{pf-main}{{\sc Proof of Theorem \ref{mainresult}.}\hspace{3mm}}{\qed}
\newcommand{\nc}{\newcommand}
\nc{\cadlag}{c\`{a}dl\`{a}g } \nc{\caglad}{c\`{a}gl\`{a}d }
\nc{\ba}{\begin{array}} \nc{\ea}{\end{array}}
\nc{\be}{\begin{equation}} \nc{\ee}{\end{equation}}
\nc{\bea}{\begin{eqnarray}} \nc{\eea}{\end{eqnarray}}
\nc{\bean}{\begin{eqnarray*}} \nc{\eean}{\end{eqnarray*}}
\nc{\bu}{\bullet} \nc{\nn}{\nonumber} \nc{\cA}{{\mathcal A}}
\nc{\cB}{{\mathcal B}} \nc{\cC}{{\mathcal C}} \nc{\cD}{{\mathcal
D}} \nc{\cL}{{\mathcal L}} \nc{\cN}{{\mathcal
N}}\nc{\bbD}{\mathbb{D}} \nc{\cG}{{\mathcal G}} \nc{\cI}{{\mathcal I}}\nc{\cF}{{\mathcal
F}} \nc{\cS}{{\mathcal S}} \nc{\cO}{{\mathcal O}}\nc{\cR}{{\mathcal
R}}\nc{\cU}{{\mathcal U}} \nc{\cH}{{\mathcal H}}
\nc{\cK}{{\mathcal K}} \nc{\cM}{{\mathcal M}} \nc{\cP}{{\mathcal
P}} \nc{\bbE}{\mathbb{E}} \nc{\bbEQ}{\mathbb{E}^{\mathbb{Q}}}
\nc{\eps}{\varepsilon}\nc{\bbU}{\mathbb{U}}
\nc{\fps}{(\Om,\cF, (\cF_t)_{t \geq 0}, \bbP)}
\nc{\fpsT}{(\Om,\cF, (\cF_t)_{t \in [0,T]}, \bbP)}
\nc{\bbEP}{\mathbb{E}_{\mathbb{P}}}\nc{\bbL}{\mathbb{L}}
\nc{\bbP}{\mathbb{P}} \nc{\bbQ}{\mathbb{Q}} \nc{\Om}{\Omega}
\nc{\om}{\omega} \nc{\bbR}{\mathbb{R}} \nc{\bbC}{\mathbb{C}}
\nc{\bfr}{\begin{flushright}} \nc{\efr}{\end{flushright}}
\nc{\dXt}{\Delta X_{t}} \nc{\dXs}{\Delta X_{s}}
\nc{\bs}{\blacksquare} \nc{\dX}{\Delta X} \nc{\dY}{\Delta Y}
\nc{\dnkx}{\left(X(T^{n}_{k})-X(T^{n}_{k-1})\right)}
\nc{\dom}{depth-of-the-market } \nc{\uar}{\uparrow}
\nc{\dar}{\downarrow}\nc{\rar}{\rightarrow}
\nc{\half}{\frac{1}{2}}
\nc{\sgn}{\mbox{sgn}}
\nc{\ssgn}{\widetilde{\sgn}}
 \nc{\hbE}{\hat{\bbE}}
\nc{\what}{\widehat} \nc{\fhat}{\what{f}}
 \nc {\parx}{\frac{\partial}{\partial x}} \nc
\def\rar{\rightarrow}
\def\dar{\downarrow}
\nc{\chf}{\mbox{$\mathbf1$}}
\numberwithin{equation}{section}
\nc{\eid}{\stackrel{d}{=}}
\begin{document}

\title{Filtered Az\'ema martingales}
\date{\today}

\author[]{Umut \c{C}etin}
\address{Department of Statistics, London School of Economics and Political Science, 10 Houghton st, London, WC2A 2AE, UK}
\email{u.cetin@lse.ac.uk}
\begin{abstract} We study the optional projection of a standard Brownian motion on the natural filtration of certain kinds of observation processes. The observation process, $Y$, is defined as a solution of a stochastic differential equation such that it reveals some (possibly noisy) information about the signs of the Brownian motion when  $Y$ hits $0$. As such, the associated optional projections are related to Az\'ema's martingales which are obtained by projecting the Brownian motion onto the filtration generated by observing its signs. 
\end{abstract}

\keywords{Az\'ema's martingale, excursions of Brownian motion, skew Brownian motion, optional projection, local times.}

\maketitle

\section{Introduction}
Let $\fps$ be a filtered probability space satisfying the usual
conditions and $W$ be a standard Brownian motion with $W_0=0$ and adapted to
$(\cF_t)_{t \geq 0}$. Define $\cG^0_t:=\sigma(\mbox{sgn}(W_s); s\leq
t)$, where
\[
\mbox{sgn}(x)=\left\{\ba{ll}
1, & \mbox{if } x >0;\\
-1, &\mbox{if } x \leq 0,\ea\right .
\]
and let $(\cG_t)_{t \geq 0}$  be the augmentation of $\cG^0_t$ with the $\bbP$-null sets.  {\em Az\'ema's martingale} is obtained by projecting
$W$ onto $\cG$. We will denote the $(\cG, \bbP)$-optional projection
of $B$ with $\mu$.  This martingale first appeared in \cite{A} and was
further studied in  a series of papers such as \cite{AY}, \cite{E} and
\cite{PAM}. Our presentation follows \cite{Pro}.

 By construction Az\'ema's martingale is closely related to the excursions of
 Brownian motion away from $0$. In fact, if we set
\be \label{e:gamma}
\gamma_t:=\sup\{s \leq t: W_s=0\},
\ee
then (see, e.g. \cite{Pro})
\be \label{e:am}
\mu_t=\bbE[W_t|\cG_t]=\sgn(W_t)\frac{\pi}{2}\sqrt{t-\gamma_t}.
\ee
Thus, Az\'ema's martingale is the best estimate, in a mean-square
sense, for the value of a Brownian motion when one only observes its
zeroes and the signs of its excursions.

The above interpretation of $\mu$
was used by \cite{cjpy} to model the default probabilities of a firm
under incomplete information. Assuming cash balances follow a Brownian
motion, \cite{cjpy} defines the default time for the firm as the first
time that its cash balances have remained negative for a certain
amount of time and doubled in absolute value. On the other hand, the
market's only information regarding the cash balances is whether the
firm is in financial distress, i.e. the cash balance is negative, or
not. This information set thus corresponds to $\cG$ in above
notation. Using certain properties of Az\'ema's martingale and some
results from excursion theory the authors explicitly compute the
$\cG$-predictable compensator of the default indicator process. The use of Az\'ema's martingale in Mathematical Finance Theory is not limited to default risk. It is also the key process in models for Parisian barrier options (see \cite{cjy}).

Motivation of this paper comes from the following question: What
happens to the optional projection of Brownian motion when we observe
its signs, possibly with some noise,  at the zeroes of another process which we can observe
continuously? Clearly, the answer to this question depends on how one
defines the observation process. The most common approach in applications is to model the observation process  as  a solution of a stochastic differential equation. In this paper we will look at two
different types of stochastic differential equations for the observation process.

The first formulation that we will consider corresponds to the case
when one imperfectly observes the signs of Brownian motion at the zeroes of an
observation process. Here imperfection corresponds to the case when
the true signal is contaminated with some noise. In view of the
standard nonlinear filtering theory one can model the observation
process as a (weak) solution to the following stochastic
differential equation (SDE):
\be \label{e:fam1}
Y_t=B_t+\alpha \int_0^t \sgn(W_{g_s(Y)})\,ds
\ee
where $\alpha \in \bbR$ and
\be
g_t(Y):=\sup\{s \leq t: Y_s=0\}.
\ee
In Section \ref{s:ffam} we study the existence and uniqueness of (weak) solutions of (\ref{e:fam1}) and the projection of $W$ onto the natural filtration of the solution. The methods employed are standard techniques from nonlinear filtering theory. On the other hand, the existence of a strong solution to (\ref{e:fam1}) remains as an interesting open problem.

Another possibility for modeling the observation process is to
introduce the knowledge on the sign of $W$ through the local times of
$Y$ whose support is contained in the zero set of $Y$. In this case
the corresponding SDE is the following:
\be  \label{e:fam2}
Y_t=B_t +\alpha \int_0^t \sgn(W_s)dL_s,
\ee
where $L$ is the {\em symmetric} local time of $Y$ at $0$. We will see in Section \ref{s:sfam} that the solution to the above equation is closely related to the {\em skew Brownian motion} which we recall next.
\begin{theorem} \label{t:HS} {\em (Harrison and Shepp \cite{HS})} There is a unique
  strong solution, called {\em skew Brownian motion}, to
\be \label{e:sbm}
X_t=B_t + \alpha L_t(X),
\ee
where $L(X)$ is the symmetric local time of $X$ at the level $0$ if and only if $|\alpha|\leq 1$.
\end{theorem}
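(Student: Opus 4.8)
The plan is to prove the two implications separately, and within the ``if'' direction to treat $|\alpha|<1$ and $|\alpha|=1$ by different arguments. For the ``only if'' direction, suppose $X$ is a solution of (\ref{e:sbm}) on some filtered space, driven by a Brownian motion $B$. Applying the symmetric It\^o--Tanaka formula to $|X|$, and using that $L(X)$ is nondecreasing and carried by $\{X=0\}$ together with the identity $\int_0^t\mathbf 1_{\{X_s=0\}}\,ds=0$ (occupation-times formula, $B$ being the martingale part of $X$), L\'evy's theorem identifies $|X|$ as a reflecting Brownian motion whose Skorokhod regulator is $L(X)$; in particular $L_t(X)>0$ a.s.\ for every $t>0$. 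Applying the same formula to $X^{+}$ and $X^{-}$ yields
\bean
X_t^{+}&=&\int_0^t\mathbf 1_{\{X_s>0\}}\,dB_s+\tfrac{1+\alpha}{2}\,L_t(X),\\
X_t^{-}&=&-\int_0^t\mathbf 1_{\{X_s<0\}}\,dB_s+\tfrac{1-\alpha}{2}\,L_t(X).
\eean
If $\alpha<-1$, the first identity exhibits the continuous local martingale $M_t:=\int_0^t\mathbf 1_{\{X_s>0\}}\,dB_s$, started at $0$, as $X_t^{+}+\tfrac{|1+\alpha|}{2}L_t(X)$, which is nonnegative; hence $M\equiv0$, forcing $L(X)\equiv0$ and contradicting $L_t(X)>0$. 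The case $\alpha>1$ is symmetric via the second identity. Hence $|\alpha|\le1$.

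For the ``if'' direction with $|\alpha|<1$, put $p=\tfrac{1+\alpha}{2}\in(0,1)$ and let $\Phi:\bbR\to\bbR$ be the increasing homeomorphism $\Phi(x)=x^{+}/p-x^{-}/(1-p)$. A generalized It\^o computation, in which the $L(X)$--terms cancel exactly because of the choice of the one--sided slopes of $\Phi$, shows that if $X$ solves (\ref{e:sbm}) then $Z:=\Phi(X)$ solves $dZ_t=\sigma(Z_t)\,dB_t$ with $\sigma(z)=1/p$ for $z>0$ and $\sigma(z)=1/(1-p)$ for $z<0$; conversely, a second generalized It\^o computation (together with the identity $L(X)=\half\,L(Z)$, obtained by matching the two Skorokhod decompositions of $|X|$) shows that $\Phi^{-1}(Z)$ solves (\ref{e:sbm}) whenever $Z$ solves the $\sigma$--equation. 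Thus, through the deterministic bijection $\Phi$, equation (\ref{e:sbm}) is equivalent to a one--dimensional SDE driven by $B$ whose diffusion coefficient is bounded, bounded away from $0$, and of bounded variation; for such equations weak existence is standard (time change of a Brownian motion) and pathwise uniqueness is classical (Nakao's theorem; alternatively, Le Gall's theory of one--dimensional SDEs with local--time terms applies to (\ref{e:sbm}) directly). By Yamada--Watanabe the $\sigma$--equation then has a unique strong solution, and transporting it back through $\Phi$ gives the unique strong solution of (\ref{e:sbm}). The endpoints $\alpha=\pm1$ are handled by hand: for $\alpha=1$ the ``only if'' computation already forces $X^{-}\equiv0$, so $X\ge0$, and Skorokhod's lemma identifies the unique strong solution explicitly as $X_t=B_t+\sup_{s\le t}(-B_s)^{+}$; the case $\alpha=-1$ is symmetric.

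The step I expect to be the genuine obstacle is the pathwise--uniqueness input for $dZ=\sigma(Z)\,dB$ (equivalently, for (\ref{e:sbm}) itself): since $\sigma$ is discontinuous at $0$, the Yamada--Watanabe modulus--of--continuity criterion does not apply, and one must instead exploit the finite--variation structure of $\sigma$ --- or argue directly on two solutions $X,X'$ of (\ref{e:sbm}), noting that $X-X'=\alpha\bigl(L(X)-L(X')\bigr)$ has finite--variation paths, so that $L(X-X')\equiv0$, and then showing that the remaining local--time contributions in the Tanaka expansion of $|X-X'|$ cancel. The hypothesis $|\alpha|\le1$ enters precisely here: for $|\alpha|>1$ there is, by the first paragraph, no solution at all; for $|\alpha|<1$ it is exactly what makes $\Phi$ a homeomorphism with $\sigma$ bounded below; and for $|\alpha|=1$ the equation collapses to the strongly well--posed Skorokhod reflection problem.
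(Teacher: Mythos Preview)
The paper does not prove this theorem: it is stated as a quotation from Harrison and Shepp \cite{HS} and used as a black box throughout, with no argument supplied. There is therefore no ``paper's own proof'' to compare against.

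For what it is worth, your outline is a correct and standard route to the result---closer in spirit to the treatments via Nakao/Le~Gall than to the original excursion-theoretic construction in \cite{HS}. The necessity argument via the Tanaka decompositions of $X^{+}$ and $X^{-}$ is clean; the sufficiency argument via the piecewise-linear scale map $\Phi$ and Nakao's pathwise-uniqueness theorem for one-dimensional SDEs with bounded-variation diffusion coefficient is the usual modern proof. Two places where a referee would ask for a line of detail: the relation between $L(X)$ and $L(Z)$ under $\Phi$ (your ``matching the two Skorokhod decompositions of $|X|$'' is a correct hint but not yet a computation), and the final paragraph, which is a plan for the uniqueness step rather than the step itself. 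Neither is a gap in the logic, only in the exposition; and since the paper itself simply cites \cite{HS}, there is nothing further to compare.
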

First appearances of skew Brownian motion in the literature goes back
to as early as \cite{im} and \cite{W}. Formally it is obtained by
changing the sign of a Brownian motion in every excursion depending on
the value of an independent Bernoulli random variable.  A related SDE
introduced by Sophie Weinryb is
\[
X_t=B_t + \int_0^t \alpha(s) dL_s(X),
\]
whose pathwise uniqueness is established in \cite{Weinryb} when
$\alpha$ is a deterministic function taking values in $[-1,1]$.

The reader is referred to the recent survey in \cite{ssbm} where one can find a discussion of different constructions of skew Brownian motion and its properties. In Section \ref{s:sfam} we will prove that there exists a unique {\em strong} solution to (\ref{e:fam2}) and see how it is connected to the solutions of (\ref{e:sbm}). This connection will be helpful in the characterisaton of the  natural filtration of the solution of (\ref{e:fam2}) and the associated projection of $W$, which is our main concern. We will see that this projection changes only by jumps which may only occur at the end of an excursion interval of a skew Brownian motion.
\section{Filtered  Az\'ema martingale of the first kind} \label{s:ffam}
Observe that the drift coefficient of the SDE in (\ref{e:fam1}) is path
dependent and, thus, the classical results on the existence and
uniqueness of strong solutions of SDEs do not apply. However, since
$\sgn$ function is bounded, one can easily construct a weak solution
to this equation on any interval $[0,T]$. Indeed, if $\beta$ and $W$
are two independent Brownian motions in some probability space, one can
define a change of measure via the martingale
\[
\exp\left(\alpha \int_0^t\sgn(W_{g_s(\beta)})d\beta_s -\half \alpha^2 t\right)
\]
and under the new measure $\beta$ solves  (\ref{e:fam1}) while $W$
stays a Brownian motion. The same
Girsanov transform also implies that the law of any weak solution $(W,
Y)$ of  (\ref{e:fam1}) is the same. Let $\cF^Y$ be  the smallest filtration satisfying the
usual conditions and containing the filtration generated by $Y$.
 In the remainder of this section
we will fix a weak solution to  (\ref{e:fam1}) and compute the
corresponding conditional probabilities for this pair. However, the
weak uniqueness of the solutions imply that the conditional laws of $W$
on $\cF^Y$ computed in this section\footnote{One should be careful in
  computing the conditional laws of random variables measurable with
  respect to $\cF_{\infty}$ since the martingale used for the change
  of measure is not uniformly integrable.} do not depend
on the choice of the weak solution.

In the computations performed in this and the subsequent section we
will often make use of the  {\em balayage formula} as given in the
next lemma.
\begin{lemma} \label{l:balayage} {\em (Theorem VI.4.2 in \cite{RY})} If $K$
  is a locally bounded $\cF$-predictable process, $(K_{g_t(Y)}Y_t)_{t
    \geq 0}$ is a continuous semimartingale and satisfies
\[
K_{g_t(Y)}Y_t=\int_0^t K_{g_s(Y)}dY_s.
\]
\end{lemma}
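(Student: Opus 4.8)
The claim is a classical identity, so the plan is to reduce it to the standard balayage formula for the process $Y$ at the level $0$. First I would recall the basic version: if $N$ is a continuous semimartingale with $N_0=0$, and $g_t=\sup\{s\le t: N_s=0\}$, then for a bounded predictable process $K$ one has $K_{g_t}N_t = \int_0^t K_{g_s}\,dN_s$. The heuristic is that $K_{g_s}$ is constant on each excursion interval of $N$ away from $0$, and at the endpoints of excursions $N$ itself vanishes, so there is no contribution from the variation of $s\mapsto K_{g_s}$; integrating $d(K_{g_s}N_s)$ by parts, the ``$N\,dK_g$'' term drops out because it is supported on $\{N=0\}$ where $N=0$. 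Applying this with $N=Y$ (which is a continuous semimartingale by hypothesis, being a weak solution of (\ref{e:fam1}) or (\ref{e:fam2})) gives exactly the asserted identity, with $g_t(Y)$ in the role of $g_t$.

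The substantive content is therefore the proof of the elementary balayage formula, which I would carry out in two steps. Step one: prove it for $K$ of the form $K_s = \chf_{(u,v]}(s)\,H_u$ with $H_u$ bounded and $\cF_u$-measurable, i.e. the generators of the predictable $\sigma$-field. For such $K$, one has $K_{g_t(Y)} = H_u\,\chf_{\{u < g_t(Y) \le v\}}$, and one checks the identity by a direct computation: on $\{g_t(Y)\le u\}$ both sides vanish; on $\{g_t(Y)>v\}$ both sides vanish (since $Y_{g_t(Y)\wedge\cdot}$ returns to $0$ after time $v$... more precisely one tracks the last visit to $0$); and on the remaining event one uses that $Y$ vanishes at times $u$ and at $g_t(Y)$, together with the fact that $\int_0^t \chf_{\{u<g_s(Y)\le v\}}\,dY_s$ telescopes to $Y_{t\wedge\sigma}-Y_{u\wedge\sigma}$ for the appropriate stopping time $\sigma$ marking when $g_s(Y)$ leaves $(u,v]$. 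Step two: pass from generators to all bounded predictable $K$ by a monotone-class / functional monotone class argument, using dominated convergence for stochastic integrals (the processes $K_{g_s(Y)}$ are uniformly bounded when $K$ is), and then localise to remove boundedness, replacing ``bounded'' by ``locally bounded'' via a sequence of stopping times reducing $K$.

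The main obstacle is the bookkeeping in Step one: one must argue carefully that the ``missing'' term $\int_0^t Y_{s-}\,d(K_{g_s(Y)})$ in the integration by parts is identically zero. The point is that $s\mapsto K_{g_s(Y)}$ is a finite-variation process whose associated measure is carried by the set of left-endpoints of excursion intervals of $Y$, hence by $\{s: Y_s=0\}$, and on that set $Y_{s}=0$; so the integrand vanishes $d(K_{g_s(Y)})$-a.e. Making this rigorous requires knowing that $K_{g_\cdot(Y)}$ has locally bounded variation (immediate for the generators above, where it is piecewise constant) and that the predictable projection considerations are handled — but since in our application $K$ is already $\cF$-predictable and $Y$ is $\cF$-adapted, no extra measurability subtlety arises. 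Once Step one is in place the rest is routine. Rather than reproduce all of this, I will simply invoke \cite[Theorem VI.4.2]{RY}, noting only that the hypothesis there ($Y$ a continuous semimartingale vanishing at $g_t(Y)$, $K$ locally bounded predictable) is met in our setting.
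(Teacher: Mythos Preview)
Your proposal is correct, and in fact does more than the paper: the paper gives no proof of this lemma at all --- it is stated with the attribution ``(Theorem VI.4.2 in \cite{RY})'' and immediately used. Your concluding sentence, in which you simply invoke \cite[Theorem VI.4.2]{RY} after checking the hypotheses, is exactly what the paper does; the preceding sketch of the standard proof (simple predictable generators, then monotone class, then localisation) is a reasonable outline of the Revuz--Yor argument and is not needed here.
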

As a first application of the balayage formula, we will now see that
$\sgn(W_{g(B^{(\alpha)})})B^{(\alpha)}$ is a weak solution of
(\ref{e:fam1})  where $B^{(\alpha)}$ is defined by $B^{(\alpha)}
_t=B_t +\alpha t$. Indeed, if we set
$Y_t=\sgn(W_{g_t(B^{(\alpha)})})B_t^{(\alpha)}$, then balayage formula
implies
\[
dY_t=\sgn(W_{g_t(B^{(\alpha)})})dB_t +\alpha\,
\sgn(W_{g_t(B^{(\alpha)})}) dt.
\]
Moreover, $\int_0^{\cdot}\sgn(W_{g_t(B^{(\alpha)})})dB_t$ is a
standard Brownian motion. The claim follows since by construction
$g(Y)=g(B^{(\alpha)})$. Thus, by the uniqueness of weak solutions, we obtain
\be \label{e:Yeid1}
Y\eid \sgn(W_{g(B^{(\alpha)})})B^{(\alpha)}.
\ee
In other words, $Y$ is obtained by changing the sign of a Brownian motion with drift via the sign of an independent Brownian motion sampled at the beginning of the current excursion (away from $0$) of the drifting Brownian motion. As such, the resulting process in a sense is in the same spirit of a skew Brownian motion described in (\ref{e:sbm}), which will be relevant to the filtered Az\'ema martingale of the second kind discussed in the next section.

An immediate consequence of the aforementioned equality in law is the following
\begin{proposition} Let $(Y,W)$ be the unique weak solution of
  (\ref{e:fam1}). Then,
\begin{itemize}
\item[i)] $\lim_{t \rar \infty} |Y_t|=\infty$ and
  $\bbP(Y_{\infty}=\infty)=\bbP(Y_{\infty}=-\infty)=\half$.
\item[ii)] $\bbP(\sup\{t:Y_t=0\}<\infty)=1$.
\end{itemize}
\end{proposition}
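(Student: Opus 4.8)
The plan is to exploit the equality in law \eqref{e:Yeid1}, which reduces both statements to facts about the process $\sgn(W_{g(B^{(\alpha)})})B^{(\alpha)}$, where $B^{(\alpha)}_t=B_t+\alpha t$ is a Brownian motion with drift independent of $W$. Since $|Y|\eid |B^{(\alpha)}|$ (flipping signs excursion-by-excursion does not change the absolute value), both claims can be read off from the corresponding claims for $B^{(\alpha)}$ together with an analysis of which sign $Y$ eventually settles into.

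For part (i), I would argue as follows. If $\alpha\neq 0$, then $|B^{(\alpha)}_t|\to\infty$ a.s. by the strong law of large numbers, hence $|Y_t|\to\infty$ a.s.; moreover for large $t$ the process $B^{(\alpha)}$ has a constant sign equal to $\sgn(\alpha)$, the last excursion is infinite, and on that last (infinite) excursion $Y$ has the constant sign $\sgn(W_{g_t(B^{(\alpha)})})$, which is $\pm1$ each with probability $\half$ by independence of $W$ and symmetry; this gives $\bbP(Y_\infty=\pm\infty)=\half$. If $\alpha=0$, then $B^{(\alpha)}$ is a recurrent Brownian motion and the statement in (i) is false as stated, so implicitly one assumes $\alpha\neq 0$ here — I would either add that hypothesis or note that the case $\alpha=0$ is exactly Az\'ema's martingale setting. (I will phrase the proof under $\alpha\neq0$, which is the case of interest.)

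For part (ii), again by \eqref{e:Yeid1} we have $\{\sup\{t:Y_t=0\}<\infty\}=\{\sup\{t:B^{(\alpha)}_t=0\}<\infty\}$ since the zero sets of $Y$ and $B^{(\alpha)}$ coincide. For a Brownian motion with non-zero drift, the last zero is a.s. finite: e.g. $0$ is transient for $B^{(\alpha)}$ because $\int_1^\infty \bbP(|B^{(\alpha)}_t|\le\eps)\,dt<\infty$ for small $\eps$, or more directly because after the last time $B^{(\alpha)}$ is at the level $0$ it drifts to $\pm\infty$; a clean way is to note $B^{(\alpha)}_t/t\to\alpha\neq0$ a.s., so $B^{(\alpha)}_t\neq0$ for all large $t$. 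Hence $\sup\{t:B^{(\alpha)}_t=0\}<\infty$ a.s., and the same holds for $Y$.

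The routine calculations here are genuinely routine; the only point requiring a little care is the identification of the limiting sign of $Y$ in part (i), i.e. that on the terminal infinite excursion of $B^{(\alpha)}$ the value $W_{g_t(B^{(\alpha)})}$ is frozen at a single point $W_\tau$ with $\tau=\sup\{s:B^{(\alpha)}_s=0\}$, and that $\sgn(W_\tau)$ is symmetric Bernoulli independent of which of $\pm\infty$ the drift sends us to. This follows from the independence of $W$ and $B^{(\alpha)}$ together with the fact that $\tau$ is a measurable functional of $B^{(\alpha)}$ alone (so conditioning on $B^{(\alpha)}$, $W_\tau$ is a mean-zero Gaussian and hence $\sgn(W_\tau)=\pm1$ with probability $\half$ each). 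I would expect this independence/freezing argument to be the main thing to state carefully; everything else is standard large-time behaviour of Brownian motion with drift.
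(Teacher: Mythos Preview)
Your approach is essentially the same as the paper's: both use the equality in law \eqref{e:Yeid1} to transfer the questions to $B^{(\alpha)}$, invoke $|B^{(\alpha)}_t|\to\infty$ and transience of Brownian motion with drift, and then use the independence of $W$ and $B^{(\alpha)}$ for the symmetric $\pm\infty$ limit. Your write-up is simply more explicit about the freezing of $g_t(B^{(\alpha)})$ at the last zero $\tau$ and the resulting symmetric Bernoulli sign, and you correctly flag that the statement tacitly requires $\alpha\neq 0$ (a point the paper does not mention); one small wording issue is that the identity of events ``$\{\sup\{t:Y_t=0\}<\infty\}=\{\sup\{t:B^{(\alpha)}_t=0\}<\infty\}$'' should really be stated as an equality of \emph{probabilities} via \eqref{e:Yeid1}, since for an arbitrary weak solution $Y$ the zero sets need not literally coincide with those of $B^{(\alpha)}$.
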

\begin{proof} i) follows from the fact that $|B^{(\alpha)}_t| \rar \infty$
  as $t \rar \infty$ and that $W$ is independent of
  $B^{(\alpha)}$. Similarly, since $B^{(\alpha)}$ transient, there is a
last time that it hits $0$. Since the zeroes of $Y$ are the same as
those of $B^{(\alpha)}$, the result follows.
\end{proof}
The above result is another manifestation of that the law of $Y$ is
equivalent to  the law of
a Brownian motion {\em only if} they are stopped at a finite
stopping time. Indeed, if the law of $Y$ were equivalent to the Wiener measure, the zero set of $Y$ would be unbounded with
probability $1$. This discrepancy also confirms that the martingale
used to obtain the measure change is not uniformly integrable.
\begin{remark} If we set $Z_t=\sgn(W_{g_t(Y)})Y_t$ and thereby note
  that $g(Z)=g(Y)$, we obtain via balayage formula
\be \label{e:fam11}
Z_t=\int_0^t \sgn(W_{g_s(Z)})dB_s + \alpha t.
\ee
Let's consider the analogous SDE without drift, i.e.
\be \label{e:sdend}
Z_t=\int_0^t \sgn(W_{g_s(Z)})dB_s.
\ee
Then, there is a unique strong solution to this
equation. Indeed, in view of the balayage formula,
$\sgn(W_{g_t(Z)})Z_t=B_t$. Thus, the zeroes of $Z$ are the zeroes of
$B$ and we have $Z_t=\sgn(W_{g_t(B)})B_t$.

On the other hand, similar arguments do not seem to work for
(\ref{e:fam11}). It is an open question whether this equation admits a strong solution.
\end{remark}

We next obtain the semimartingale decomposition of $Y$ with respect to
its own filtration.
\begin{proposition} \label{p:innovation} Let $(Y,W)$ be the unique weak solution of
  (\ref{e:fam1}). Then,
\begin{itemize}
\item[i)] $\bbE[\sgn(W_{g_t(Y)})|\cF^Y_t]=\tanh(\alpha Y_t)$;
\item[ii)] $Y$ has the following decomposition in its own filtration:
\[
Y_t=B^Y_t+\alpha \int_0^t \tanh(\alpha Y_s)\,ds,
\]
\end{itemize}
where $B^Y$ is an $\cF^Y$-Brownian motion.
\end{proposition}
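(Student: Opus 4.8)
The plan is to compute this filter by the classical reference-measure (Kallianpur--Striebel) technique of nonlinear filtering, the only non-routine ingredient being an application of the balayage formula (Lemma~\ref{l:balayage}) which reduces the change-of-measure density to a function of the single random variable $\sgn(W_{g_t(Y)})$. Let $\bbP_0$ be the reference measure, equivalent to $\bbP$ on every $\cF_t$, under which $Y$ is a standard Brownian motion independent of $W$ and $d\bbP|_{\cF_t}=Z_t\,d\bbP_0|_{\cF_t}$ with
\[
Z_t:=\exp\left(\alpha\int_0^t\sgn(W_{g_s(Y)})\,dY_s-\half\alpha^2 t\right);
\]
this is precisely the measure furnished by the Girsanov construction of the weak solution recalled above. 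Since $g_t(Y)\le t$ is a functional of the path of $Y$, the variable $\sgn(W_{g_t(Y)})$ is bounded and $\cF_t$-measurable, and, as $\bbP\sim\bbP_0$ on $\cF_t$, the $\sigma$-field $\cF^Y_t$ is unambiguous; hence the Bayes formula
\[
\bbEP[\sgn(W_{g_t(Y)})\,|\,\cF^Y_t]=\frac{\bbE_{\bbP_0}[\sgn(W_{g_t(Y)})\,Z_t\,|\,\cF^Y_t]}{\bbE_{\bbP_0}[Z_t\,|\,\cF^Y_t]}
\]
is available.

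For (i), applying Lemma~\ref{l:balayage} with $K=\sgn(W_\cdot)$ and $Y$ regarded as a $\bbP_0$-semimartingale gives $\int_0^t\sgn(W_{g_s(Y)})\,dY_s=\sgn(W_{g_t(Y)})Y_t$, so that $Z_t=\exp\bigl(\alpha\,\sgn(W_{g_t(Y)})Y_t-\half\alpha^2t\bigr)$ involves $W$ only through $\sgn(W_{g_t(Y)})$. Under $\bbP_0$, since $W$ is independent of $Y$ and $g_t(Y)$ is a functional of $Y$ with values in $[0,t]$ that is a.s. strictly positive (by the arcsine law for the last zero of $Y$ before $t$), the variable $W_{g_t(Y)}$ is, conditionally on the path of $Y$, a non-degenerate centred Gaussian; hence $\sgn(W_{g_t(Y)})$ is a symmetric $\pm1$ variable independent of $\cF^Y_\infty$. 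Conditioning first on $\cF^Y_\infty$ and then using the tower property yields $\bbE_{\bbP_0}[Z_t|\cF^Y_t]=e^{-\alpha^2t/2}\cosh(\alpha Y_t)$ and $\bbE_{\bbP_0}[\sgn(W_{g_t(Y)})Z_t|\cF^Y_t]=e^{-\alpha^2t/2}\sinh(\alpha Y_t)$, and the ratio of the two is $\tanh(\alpha Y_t)$, which is (i).

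For (ii), I would run the standard innovations argument. Set $B^Y_t:=Y_t-\alpha\int_0^t\tanh(\alpha Y_s)\,ds$; by the defining SDE for $Y$ this equals $B_t+\alpha\int_0^t\bigl(\sgn(W_{g_s(Y)})-\tanh(\alpha Y_s)\bigr)\,ds$, a continuous, $\cF^Y$-adapted and integrable process. For $s<t$ and $A\in\cF^Y_s$ one has $\bbEP[\chf_A(B_t-B_s)]=0$ because $\cF^Y_s\subseteq\cF_s$ and $B$ is an $(\cF,\bbP)$-Brownian motion; and Fubini combined with (i), applied at each $u\in[s,t]$ (using $A\in\cF^Y_s\subseteq\cF^Y_u$ to replace $\sgn(W_{g_u(Y)})$ under the expectation by its $\cF^Y_u$-conditional mean $\tanh(\alpha Y_u)$), shows that the drift term also contributes $0$. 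Hence $B^Y$ is a continuous $(\cF^Y,\bbP)$-martingale; since it differs from $B$ by a continuous process of finite variation, $\langle B^Y\rangle_t=t$, so L\'evy's characterisation identifies $B^Y$ as an $\cF^Y$-Brownian motion, which is exactly the asserted decomposition.

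The computation is essentially bookkeeping once the filtering framework is in place; the two points that deserve attention are the balayage step --- one should make sure that $\sgn(W_\cdot)$ is an admissible (locally bounded, predictable) integrand, the customary mild subtlety in balayage arguments, which causes no difficulty because the relevant failure of predictability is confined to the Lebesgue-null zero set of $Y$ --- and the need to keep the whole computation at a fixed finite horizon $t$, since the density $Z$ is not uniformly integrable and the Bayes rule does not extend to $\cF_\infty$.
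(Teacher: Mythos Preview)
Your proof is correct and follows essentially the same route as the paper: for (i) you use the reference measure $\bbP_0$ (called $\bbQ$ in the paper), apply the Bayes/Kallianpur--Striebel formula, simplify the stochastic integral via balayage to $\sgn(W_{g_t(Y)})Y_t$, and then exploit the symmetry of $\sgn(W_{g_t(Y)})$ under $\bbP_0$ to obtain $\tanh(\alpha Y_t)$. The only difference is that for (ii) the paper simply invokes a standard innovation result (Theorem~8.1 in \cite{ls}), whereas you spell out the martingale/L\'evy-characterisation argument explicitly; this is the same argument, just unfolded.
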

\begin{proof} Note that ii) follows immediately from i) in view of the
  standard results on filtering, see, e.g. Theorem 8.1 in
  \cite{ls}. To see why i) holds take a constant $T>t$ and consider the measure $\bbQ_T\sim \bbP_T$ under
  which $(Y_s)_{s \in [0,T]}$ is a Brownian motion independent of
  $(W_s) _{s \in [0,T]}$ where $\bbP_T$ is the restriction of $\bbP$
  to $\cF_T$. Then, it follows from  Girsanov's theorem that
\bean
\bbE[\sgn(W_{g_t(Y)})|\cF^Y_t]&=&\frac{\bbE^{\bbQ}\left[\sgn(W_{g_t(Y)})
    \exp\left(\alpha \int_0^t\sgn(W_{g_s(Y)})dY_s -\half \alpha^2
      t\right)\bigg|\cF^Y_t\right]}{\bbE^{\bbQ}\left[\exp\left(\alpha
      \int_0^t\sgn(W_{g_s(Y)})dY_s -\half \alpha^2
      t\right)\bigg|\cF^Y_t\right]}\\
&=&\frac{\bbE^{\bbQ}\left[\sgn(W_{g_t(Y)})
    \exp\left(\alpha\,
      \sgn(W_{g_t(Y)})Y_t\right)\big|\cF^Y_t\right]}{\bbE^{\bbQ}\left[\exp\left(\alpha\,
      \sgn(W_{g_t(Y)})Y_t\right)\big|\cF^Y_t\right]}\\
&=&\frac{\sinh(\alpha Y_t)}{\cosh(\alpha Y_t)},
\eean
where the second equality follows from Lemma \ref{l:balayage} and the
last equality is due to the independence of $W$ and $Y$ under $\bbQ$
along with the facts that $g_t$ is $\cF^Y_t$-measurable and the
probability that $W_s>0$ is $1/2$ for any $s$.
\end{proof}
Using the same technique as in the proof of the above proposition, we
can obtain the conditional law of $W$.
\begin{theorem} Let $p(t, y-x)$ be the transition density of a standard
  Brownian motion and set
\be \label{e:phi}
\Phi(x):=\int_{-\infty}^x p(1,y)\,dy.
\ee
\begin{itemize}
\item[i)] $\cF^Y_t$-conditional law of $W_t$ has a density, which is given
by
\[
\bbP(W_t\in dx |\cF^Y_t]=p(t,x)\frac{\Phi\left(\sqrt{\frac{g_t}{t(t-g_t)}}x\right)e^{\alpha
    Y_t}+\Phi\left(-\sqrt{\frac{g_t}{t(t-g_t)}}x\right)e^{-\alpha
    Y_t}}{\cosh(\alpha Y_t)}\,dx.
\]
\item[ii)]
Conditional moments of $W$ are given by
\[
\bbE[W^n_t|\cF^Y_t]=\left\{\ba{ll}
  \frac{(2k)!}{\sqrt{\pi} k!} \left(\frac{g_t(Y)}{2}\right)^k, & \mbox{if } n=2k,\\
\frac{k !}{\sqrt{\pi}}\left(2g_t(Y)\right)^{k+\half}\tanh(\alpha Y_t), &
\mbox{if } n=2k +1.
\ea \right .
\]
In particular,
\[
\bbE[W_t|\cF^Y_t]=\sqrt{\frac{2g_t(Y)}{\pi}}\tanh(\alpha Y_t).
\]
\end{itemize}
\end{theorem}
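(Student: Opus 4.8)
The plan is to prove (i) by repeating, for a general bounded Borel functional of $W_t$, the Girsanov‑plus‑balayage computation that produced Proposition \ref{p:innovation}, and then to read off (ii) by integrating $x^n$ against the density obtained in (i). Fix $T>t$ and let $\bbQ_T\sim\bbP_T$ ($\bbP_T$ the restriction of $\bbP$ to $\cF_T$) be the measure under which $(Y_s)_{s\in[0,T]}$ is a Brownian motion independent of $(W_s)_{s\in[0,T]}$, with density process $Z_s=\exp\!\big(\alpha\int_0^s\sgn(W_{g_u(Y)})\,dY_u-\half\alpha^2 s\big)$. Since $\cF^Y_t\subset\cF_t$, $W_t$ is $\cF_t$‑measurable and $Z$ is a $(\cF_s)$‑$\bbQ$‑martingale, Bayes' formula together with the tower property gives, for bounded Borel $h$,
\[
\bbE[h(W_t)\mid\cF^Y_t]=\frac{\bbE^{\bbQ}\!\big[h(W_t)\,e^{\alpha\sgn(W_{g_t(Y)})Y_t}\bigm|\cF^Y_t\big]}{\bbE^{\bbQ}\!\big[e^{\alpha\sgn(W_{g_t(Y)})Y_t}\bigm|\cF^Y_t\big]},
\]
where the $e^{-\half\alpha^2 t}$ factors have cancelled and, exactly as in Proposition \ref{p:innovation}, Lemma \ref{l:balayage} has been used to replace $\int_0^t\sgn(W_{g_s(Y)})\,dY_s$ by $\sgn(W_{g_t(Y)})Y_t$; the denominator equals $\cosh(\alpha Y_t)$ as there.

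To evaluate the numerator I would use that under $\bbQ$ the process $W$ is independent of $\cF^Y_t$, so that conditionally on $\cF^Y_t$ the quantities $g_t=g_t(Y)$ and $Y_t$ are constants while $W$ is still a Brownian motion; splitting $\sgn(W_{g_t})=\mathbf{1}_{\{W_{g_t}>0\}}-\mathbf{1}_{\{W_{g_t}\le 0\}}$ and conditioning once more on $W_t=x$, the Markov property (equivalently the Brownian‑bridge law) gives $W_{g_t}\mid\{W_t=x\}\sim N\!\big(\tfrac{g_t}{t}x,\tfrac{g_t(t-g_t)}{t}\big)$ under $\bbQ$, hence $\bbP^{\bbQ}(W_{g_t}>0\mid W_t=x,\cF^Y_t)=\Phi\!\big(\sqrt{g_t/(t(t-g_t))}\;x\big)$. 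Integrating against the $N(0,t)$‑law of $W_t$ and using $\Phi(-a)=1-\Phi(a)$ yields the density in (i). The two points requiring care are that the density $Z$ is not uniformly integrable, so the change of measure must be performed on the finite horizon $[0,T]$ with $T$ then playing no role, and that the $\bbP$‑null event $\{g_t=0\}$ must be discarded.

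Part (ii) follows by taking $h(x)=x^n$ in (i). Using that $p(t,\cdot)$ is even and $\Phi(-cx)=1-\Phi(cx)$ with $c=\sqrt{g_t/(t(t-g_t))}$, the two exponential terms recombine: for $n=2k$ the $\Phi$‑part averages out and no $\tanh$ survives, while for $n=2k+1$ one is left with $\tanh(\alpha Y_t)$ times $2\int_{\bbR}x^{2k+1}p(t,x)\Phi(cx)\,dx$. The cleanest route to the remaining integrals is to observe that the same Girsanov–balayage argument, applied to a functional of $W_{g_t(Y)}$ alone (so that no Brownian bridge is needed), shows that conditionally on $\cF^Y_t$ the variable $W_{g_t(Y)}$ has the law of an $N(0,g_t)$ variable whose sign has been reweighted by $e^{\pm\alpha Y_t}/\cosh(\alpha Y_t)$ — an $e^{\pm\alpha Y_t}$‑tilted half‑normal — whose moments are precisely the elementary half‑Gaussian moments, the odd ones contributing the $\sqrt{\pi}$ and the factorials; writing $W_t=W_{g_t}+(W_t-W_{g_t})$, where conditionally $W_t-W_{g_t}\sim N(0,t-g_t)$ is independent of $W_{g_t}$ and has odd moments zero, and expanding the binomial then produces the asserted expressions after collapsing the resulting sums. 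I expect the only genuine labour to be this last algebraic reduction rather than anything probabilistic; as a consistency check the case $n=1$ collapses to $\bbE[W_t\mid\cF^Y_t]=\sqrt{2g_t(Y)/\pi}\;\tanh(\alpha Y_t)$, since $W_t-W_{g_t}$ has vanishing conditional mean.
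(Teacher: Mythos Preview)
Your argument for part (i) is exactly the paper's: pass to $\bbQ$ by Girsanov, use the balayage lemma to collapse the stochastic exponential to $e^{\alpha\,\sgn(W_{g_t})Y_t}$, and then compute $\bbQ(W_{g_t}>0\mid W_t=x,\cF^Y_t)$ via the Brownian-bridge law.

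For part (ii) the paper takes a different route from either of your suggestions. Instead of integrating the density or decomposing $W_t=W_{g_t}+(W_t-W_{g_t})$, it works with the generating function
\[
u(\lambda):=\bbE^{\bbQ}\!\left[e^{\lambda W_t}\,e^{\alpha\,\sgn(W_{g_t})Y_t}\,\big|\,\cF^Y_t\right]
=e^{\frac{\lambda^2}{2}(t-g_t)}\Big\{e^{\alpha Y_t}\!\int_0^{\infty}\!e^{\lambda x}p(g_t,x)\,dx+e^{-\alpha Y_t}\!\int_{-\infty}^{0}\!e^{\lambda x}p(g_t,x)\,dx\Big\},
\]
obtained from the $\bbQ$-martingale property of $e^{\lambda W_s-\lambda^2 s/2}$, and then reads off $\bbE[W_t^n\mid\cF^Y_t]=u^{(n)}(0)/\cosh(\alpha Y_t)$. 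Your decomposition is the probabilistic counterpart of this factorisation of $u$ (the prefactor is the MGF of $W_t-W_{g_t}$, the brace that of the sign-tilted $W_{g_t}$), so the two are essentially equivalent; the MGF packaging just avoids the binomial bookkeeping.

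A warning, however: your assertion that the binomial expansion ``produces the asserted expressions after collapsing the resulting sums'' will not go through for $n\ge 2$, and the paper's computation has the matching oversight --- when evaluating $u^{(n)}(0)$ it differentiates only the brace and ignores the prefactor $e^{\lambda^2(t-g_t)/2}$. In fact the density from (i) gives directly $\bbE[W_t^{2k}\mid\cF^Y_t]=\int x^{2k}p(t,x)\,dx=\tfrac{(2k)!}{2^k k!}\,t^k$ (the $\Phi$-weights average out by symmetry), not $\tfrac{(2k)!}{\sqrt{\pi}\,k!}(g_t/2)^k$; and for $n=3$ your own decomposition yields the extra cross term $3(t-g_t)\,\bbE[W_{g_t}\mid\cF^Y_t]$. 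The displayed formulas are really the conditional moments of $W_{g_t(Y)}$, which coincide with those of $W_t$ only for $n=0,1$. So your method is sound --- and indeed would expose this --- but do not expect the algebra to land on the statement as written.
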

\begin{proof} Let $f:\bbR\mapsto \bbR$ be a bounded measurable
  function. Then,
\[
\bbE[f(W_t)|\cF^Y_t]=\frac{\bbE^{\bbQ}\left[f(W_t)
    \exp\left(\alpha\,
      \sgn(W_{g_t(Y)})Y_t\right)\big|\cF^Y_t\right]}{\cosh(\alpha
    Y_t)}
\]
where $\bbQ$ is the measure defined in the proof of Proposition \ref{p:innovation}.
Moreover, the numerator in the above fraction equals
\be \label{e:num1}
\int_{-\infty}^{\infty}dx f(x) p(t,x) \bbE^{\bbQ}\left[\exp\left(\alpha\,
      \sgn(W_{g_t(Y)})Y_t\right)\big|\cF^Y_t, W_t=x\right]
\ee
due to the independence of $W$ and $Y$ under $\bbQ$. On the other
hand, for any $s \leq t$ the distribution of $W_s$ conditional on
$W_t=x$ is Gaussian with mean $\frac{s}{t}x$ and variance
$\frac{s(t-s)}{t}$. Thus,
\[
\bbP(W_s >0|W_t=x)=
\bbP(\sqrt{\frac{s(t-s)}{t}}W_1+\frac{s}{t}x)=\Phi\left(\sqrt{\frac{s}{t
    (t-s)}} x\right).
\]
Utilising once more the independence of $Y$
and $W$, we see that (\ref{e:num1}) equals
\[
\int_{-\infty}^{\infty}dx f(x) p(t,x) \left\{\Phi\left(\sqrt{\frac{g_t}{t(t-g_t)}}x\right)e^{\alpha
    Y_t}+\Phi\left(-\sqrt{\frac{g_t}{t(t-g_t)}}x\right)e^{-\alpha
    Y_t}\right\}.
\]
This completes the proof of the density.

The conditional moments can be calculated by integrating this density,
which is a lengthy task. However, since for any $\lambda \in \bbR$
$\exp(\lambda W_t-\half t)$ is a martingale
independent of $Y$, and in particular of $g_t(Y)$, one has
\bean
u(\lambda)&:=&\bbE^{\bbQ}\left[\exp(\lambda W_t)
    \exp\left(\alpha\,
      \sgn(W_{g_t(Y)})Y_t\right)\big|\cF^Y_t\right]\\
&=&\bbE^{\bbQ}\left[\exp\left(\lambda
    W_{g_t(Y)}+ \half \lambda^2 (t- g_t(Y))\right)
    \exp\left(\alpha\,
      \sgn(W_{g_t(Y)})Y_t\right)\bigg|\cF^Y_t\right]\\
&=&\exp\left(\half \lambda^2 (t- g_t(Y))\right)\left\{ e^{\alpha Y_t}
  \int_0^{\infty}e^{\lambda x} p(g_t(Y),x)\,dx +e^{-\alpha Y_t}
\int_{-\infty}^0e^{\lambda x} p(g_t(Y),x)\,dx\right\}.
\eean
 Since we can differentiate with respect to $\lambda$ under the
 integral sign, we have
\[
\frac{d^n u}{d\lambda^n}\bigg|_{\lambda=0}= e^{\alpha Y_t}
  \int_0^{\infty}x^n p(g_t(Y),x)\,dx +e^{-\alpha Y_t}
\int_{-\infty}^0x^n p(g_t(Y),x)\,dx.
\]
Moreover, one has
\[
\int_0^{\infty}x^n \frac{1}{\sqrt{2 \pi a}} e^{-\frac{x^2}{2a}}\,dx=\frac{(2a)^{n/2}}{2 \sqrt{\pi}}\Gamma(\frac{n+1}{2})=\left\{\ba{ll}
\frac{(2k)!}{\sqrt{\pi}k! 2^{k+1}} (a)^k, & \mbox{if } n=2k,\\
\frac{k !}{\sqrt{2 \pi}}2^k a^{k+\half}, & \mbox{if } n=2k +1.
\ea \right .
\]
Thus, due to the symmetry of $p$, we obtain
\[
\frac{d^n u}{d\lambda^n}\bigg|_{\lambda=0}= \left\{\ba{ll} 2
  \cosh(\alpha Y_t) \frac{(2k)!}{\sqrt{\pi} k! 2^{k+1}} ( g_t(Y))^k, & \mbox{if } n=2k,\\
2 \sinh(\alpha Y_t)\frac{k !}{\sqrt{2 \pi}}2^k g_t(Y)^{k+\half}, &
\mbox{if } n=2k +1.
\ea \right .
\]
\end{proof}
In view of the above theorem we may define the {\em filtered Az\'ema
  martingale of the first kind} by $\hat{\mu}_t=\sqrt{\frac{2g_t(Y)}{\pi}}\tanh(\alpha
Y_t)$. Observe that, since $\tanh(0)=0$ and $g_t(Y)$ changes value
only when $Y$ hits $0$, $\hat{\mu}$ is a continuous martingale in
contrast to the discontinuous Az\'ema
  martingale, $\mu$.

Although the Brownian motion $W$ is clearly not independent of $Y$, observing $Y$ does not tell us anything new regarding the process $(\gamma_t)$. We will only prove $\gamma_1$ is independent of $Y$. The analogous statement can be proven for any $\gamma_t$ along the same lines.
\begin{proposition} $\gamma_1$ is independent of $\cF^Y$.
\end{proposition}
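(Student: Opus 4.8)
The plan is to reuse the change-of-measure computation from the proof of Proposition \ref{p:innovation}. Fix $T>1$. Since $\gamma_1$ is $\cF_1$-measurable, hence $\cF_T$-measurable, and $\cF^Y_T\subseteq\cF_T$, I would work under the measure $\bbQ=\bbQ_T$ of that proposition, under which $(Y_s)_{s\le T}$ is a Brownian motion independent of $(W_s)_{s\le T}$ and $\frac{d\bbP_T}{d\bbQ_T}=\exp\big(\alpha\int_0^T\sgn(W_{g_s(Y)})\,dY_s-\half\alpha^2T\big)$, which by the balayage formula (Lemma \ref{l:balayage}), exactly as in Proposition \ref{p:innovation}, equals $\exp\big(\alpha\,\sgn(W_{g_T(Y)})Y_T-\half\alpha^2T\big)$. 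The goal is to show that for every bounded Borel $f$ the conditional expectation $\bbE_{\bbP}[f(\gamma_1)\mid\cF^Y_T]$ is $\bbP$-a.s.\ a deterministic constant; doing this for every $T>1$ then yields the claim.

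By the conditional form of Bayes' rule,
\[
\bbE_{\bbP}[f(\gamma_1)\mid\cF^Y_T]=\frac{\bbE_{\bbQ}\!\big[f(\gamma_1)\exp\big(\alpha\,\sgn(W_{g_T(Y)})Y_T\big)\,\big|\,\cF^Y_T\big]}{\bbE_{\bbQ}\!\big[\exp\big(\alpha\,\sgn(W_{g_T(Y)})Y_T\big)\,\big|\,\cF^Y_T\big]},
\]
the deterministic factor $e^{-\half\alpha^2T}$ having cancelled. Under $\bbQ$ the process $W$ is independent of $\sigma(Y)\supseteq\cF^Y_T$, while $g_T(Y)$ and $Y_T$ are $\cF^Y_T$-measurable; moreover $g_T(Y)>0$ $\bbQ$-a.s.\ because a Brownian motion started at $0$ a.s.\ has a zero in every interval $(0,\eps)$. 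Hence, by the independence (``freezing'') lemma, both conditional expectations are obtained by substituting the $\cF^Y_T$-measurable pair $(g_T(Y),Y_T)$ into the deterministic function
\[
(s,y)\longmapsto \bbE_{\bbQ}\big[f(\gamma_1)e^{\alpha\,\sgn(W_s)y}\big]=e^{\alpha y}\,\bbE_{\bbQ}[f(\gamma_1);\,W_s>0]+e^{-\alpha y}\,\bbE_{\bbQ}[f(\gamma_1);\,W_s\le0],\qquad s>0
\]
(and its $f\equiv1$ specialisation for the denominator).

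The one nontrivial ingredient is the elementary claim that, for a standard Brownian motion and every fixed $s>0$, the variable $\gamma_1=\sup\{u\le1:W_u=0\}$ is independent of $\sgn(W_s)$. I would deduce this from the symmetry $-W\eid W$: the zero set of $W$ is unchanged under $W\mapsto-W$, so $\gamma_1(-W)=\gamma_1(W)$, whereas $\sgn(-W_s)=-\sgn(W_s)$ holds a.s.\ because $\bbP(W_s=0)=0$; consequently $\bbP(\gamma_1\in A,\ \sgn W_s=1)=\bbP(\gamma_1\in A,\ \sgn W_s=-1)$ for every Borel $A$, and since the two sides sum to $\bbP(\gamma_1\in A)$, each equals $\tfrac12\bbP(\gamma_1\in A)$. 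Feeding this into the displayed function collapses the numerator to $\bbE_{\bbQ}[f(\gamma_1)]\cosh(\alpha Y_T)$ and the denominator to $\cosh(\alpha Y_T)$, so $\bbE_{\bbP}[f(\gamma_1)\mid\cF^Y_T]=\bbE_{\bbQ}[f(\gamma_1)]$. Since $W$ is a standard Brownian motion under both $\bbP$ and $\bbQ$ (the measure change affects only $Y$), $\bbE_{\bbQ}[f(\gamma_1)]=\bbE_{\bbP}[f(\gamma_1)]$, so $\gamma_1$ is independent of $\cF^Y_T$ for every $T>1$.

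Finally, $\bigcup_{T>1}\cF^Y_T$ is closed under finite intersections and generates $\cF^Y$ up to $\bbP$-null sets, so a monotone-class (Dynkin) argument upgrades independence from each $\cF^Y_T$ to independence of $\gamma_1$ and $\cF^Y$. I expect the delicate points to be structural rather than computational: because the density martingale is not uniformly integrable one cannot change measure globally on $\cF_\infty$, so the argument must be carried out at each finite horizon $T$ and then patched together; and one must record that $g_T(Y)>0$ a.s.\ so that the convention $\sgn(0)=-1$ never enters the freezing step. Everything else is a verbatim repetition of the computation in Proposition \ref{p:innovation}.
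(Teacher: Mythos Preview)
Your proof is correct and shares the same global strategy as the paper---pass to the reference measure $\bbQ$, collapse the density via the balayage formula, and reduce everything to the single probabilistic fact that $\gamma_1$ is independent of $\sgn(W_s)$ for every fixed $s>0$. Where you differ is in how you establish this last fact. The paper carries out a case analysis: for $t\le 1$ it splits on $[g_t(Y)<\gamma_1]$ versus $[g_t(Y)>\gamma_1]$, invoking the Brownian-bridge decomposition on $[0,\gamma_1]$ for the first set and Lemme~1 of \cite{AY} for the second; for $t>1$ with $g_t(Y)>1$ it conditions further on $W_1$, uses the Markov property, and then the explicit law of $W_1$ given $\gamma_1$. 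Your reflection argument---$(\gamma_1,\sgn W_s)\eid(\gamma_1,-\sgn W_s)$ because $W\mapsto -W$ preserves the zero set---dispatches all cases at once and is considerably shorter; it buys you a uniform treatment of $s\le 1$ and $s>1$ and removes the need for the auxiliary results from \cite{AY} and \cite{RY}. The paper's approach, by contrast, makes the underlying pathwise structure (bridge before $\gamma_1$, meander after) more visible, which is in keeping with the excursion-theoretic flavour of the article. Your handling of the horizon (working at each $T>1$ and then using a $\pi$-system/monotone-class argument to reach $\cF^Y_\infty$) is equivalent to the paper's showing $\bbE[f(\gamma_1)\mid\cF^Y_t]$ is constant for every~$t$.
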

\begin{proof} Let $t \leq 1$ and consider
\[
\bbE^{\bbQ}\left[f(\gamma_1)\exp\left(\alpha\sgn(W_{g_t(Y)})Y_t\right)\big|\cF^Y_t\right]
\]
for some bounded measurable real function $f$. Observe that
\[
\chf_{[g_t(Y) < \gamma_1]}\bbE^{\bbQ}\left[\exp\left(\alpha\sgn(W_{g_t(Y)})Y_t\right)\big|\cF^Y_t,\gamma_1\right]=\chf_{[g_t(Y) < \gamma_1]}\cosh(\alpha Y_t)
\]
since conditional on $\gamma_1$, $(W_t)_{t \in [0,\gamma_1]}$ is a Brownian bridge (see Exercise XII.3.8 in \cite{RY}) and therefore $\bbQ(W_{g_t(Y)}>0|g_t(Y), \gamma_1)=\half$ on the set $[g_t(Y) < \gamma_1]$.  Moreover,
\[
\chf_{[g_t(Y) > \gamma_1]}\bbE^{\bbQ}\left[\exp\left(\alpha\sgn(W_{g_t(Y)})Y_t\right)\big|\cF^Y_t,\gamma_1\right]=\chf_{[g_t(Y) > \gamma_1]}\cosh(\alpha Y_t),
\]
as well since $g_t(Y)\leq 1$ and therefore $\sgn(W_{g_t(Y)})$ is independent of $\gamma_1$ (see, e.g. Lemme 1 in \cite{AY}). Since $[g_t(Y) = \gamma_1]$ is a $\bbQ$-null set due to their independence and the continuity of the distribution of $\gamma_1$, we deduce that
\[
\bbE^{\bbQ}\left[f(\gamma_1)\exp\left(\alpha\sgn(W_{g_t(Y)})Y_t\right)\big|\cF^Y_t\right]=\cosh(\alpha Y_t)\bbE^{\bbQ}[f(\gamma_1)]=\cosh(\alpha Y_t)\bbE[f(\gamma_1)],
\]
which in turn implies $\bbE[f(\gamma_1)|\cF^Y_t]=\bbE[f(\gamma_1)]$ for any $f$.

To show the independence for $t>1$, note that it suffices to consider
\[
\chf_{[g_t(Y)>1]}\bbE^{\bbQ}\left[f(\gamma_1)\exp\left(\alpha\sgn(W_{g_t(Y)})Y_t\right)\big|\cF^Y_t\right]
\]
since when $\chf_{[g_t(Y)<1]}$ the problem is reduced to the previous case. Notice by the Markov property of $W$ that, given $W_1$, $\gamma_1$ and $\sgn(W_u)$ are independent for any $u>1$. Thus, on $[g_t(Y)>1]$
\bean
\bbE^{\bbQ}\left[f(\gamma_1)\exp\left(\alpha\sgn(W_{g_t(Y)})Y_t\right)\big|\cF^Y_t, W_1\right]&=&\bbE^{\bbQ}[f(\gamma_1)|W_1]\bbE^{\bbQ}\left[\exp\left(\alpha\sgn(W_{g_t(Y)})Y_t\right)\big|\cF^Y_t, W_1\right]\\
&=&\bbE^{\bbQ}[f(\gamma_1)|W_1]\exp(\alpha Y_t)\Phi\left(\frac{W_1}{\sqrt{g_t(Y)-1}}\right)\\
&&+\bbE^{\bbQ}[f(\gamma_1)|W_1]\exp(-\alpha Y_t)\Phi\left(-\frac{W_1}{\sqrt{g_t(Y)-1}}\right),
\eean
where $\Phi$ is the function defined in (\ref{e:phi}). Therefore, on $[g_t(Y)>1]$
\bean
\bbE^{\bbQ}\left[f(\gamma_1)\exp\left(\alpha\sgn(W_{g_t(Y)})Y_t\right)\big|\cF^Y_t\right]&=&\bbE^{\bbQ}\left[f(\gamma_1) \exp(\alpha Y_t)\Phi\left(\frac{W_1}{\sqrt{g_t(Y)-1}}\right)\bigg|\cF^Y_t\right]\\
 &&+\bbE^{\bbQ}\left[f(\gamma_1)\exp(-\alpha Y_t)\Phi\left(-\frac{W_1}{\sqrt{g_t(Y)-1}}\right)\bigg|\cF^Y_t\right].
\eean
On the other hand, the conditional law of $W_1$ given $\gamma_1=s$ is (see Exercise XII.3.8 in \cite{RY})
\[
\frac{|x|}{2(1-s)}\exp\left(-\frac{x^2}{2(1-s)}\right)\,dx.
\]
Using this density, one can directly show that
\[
\bbE^{\bbQ}\left[\Phi\left(\frac{W_1}{\sqrt{g_t(Y)-1}}\right)\bigg|\gamma_1,g_t(Y)\right] =\bbE^{\bbQ}\left[\Phi\left(-\frac{W_1}{\sqrt{g_t(Y)-1}}\right)\bigg|\gamma_1,g_t(Y)\right]= \half.
\]
Hence, we arrive at
\[
\chf_{[g_t(Y)>1]}\bbE^{\bbQ}\left[f(\gamma_1)\exp\left(\alpha\sgn(W_{g_t(Y)})Y_t\right)\big|\cF^Y_t\right] =\chf_{[g_t(Y)>1]}\cosh(\alpha Y_t)\bbE^{\bbQ}[f(\gamma_1)],
\]
which yields the claimed independence.
\end{proof}

Since $\hat{\mu}$ is adapted to $\cF^Y$ by definition, we deduce that the filtered Az\'ema martingale of the first kind is independent of $(\gamma_t)$. This is in stark contrast to Az\'ema's martingale, $\mu$, which is a function of the process  $(\gamma_t)$.

% Recall that the unobserved drift of $Y$ is
% $\int_0^{\cdot}\chf_{[W_{g_s(Y)}>0]}-\chf_{[W_{g_s(Y)}\leq 0]}\,ds$,
% so  it is natural to consider the $\cF^Y_t$-conditional distribution
% of $\int_0^{t}\chf_{[W_{s}>0]}\,ds$.
\section{Filtered Az\'ema martingale of the second kind} \label{s:sfam}
We now return to study the solutions of equation (\ref{e:fam2}) and
the associated projection of $W$. Recall that the   equation (\ref{e:fam2})
is the following SDE:
\be  \label{e:fam21}
Y_t=B_t +\alpha \int_0^t \sgn(W_s)dL_s(Y),
\ee
where $L(Y)$ is the symmetric local time of $Y$ at $0$. The right
local time of $Y$ at $0$ will be denoted with $\ell(Y)$. We will write
$L$ and $\ell$ instead of $L(Y)$ and $\ell(Y)$, respectively, when no confusion arises.

\begin{proposition} \label{p:fam2weak} Suppose that $|\alpha|\leq 1$.
\begin{itemize}
\item[i)] There is a
  unique weak solution to (\ref{e:fam21}). Moreover, $Y\sgn(W_{g(Y)})
  \eid X$, where $X$ is a skew Brownian motion which
  solves (\ref{e:sbm}).
\item[ii)] $|Y|$ is a
  reflecting Brownian motion. The symmetric and
  nonsymmetric local times, $\ell$ and $L$, respectively, of $Y$ at $0$ are related
  by
\[
\ell_t=\int_0^t\left(1+\alpha \sgn(W_s)\right)dL_s.
\]
\end{itemize}
\end{proposition}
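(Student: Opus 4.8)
\emph{Strategy and reduction of a weak solution.} The plan is to push everything to the skew Brownian motion through the balayage formula of Lemma~\ref{l:balayage} and then invoke Theorem~\ref{t:HS}; the hypothesis $|\alpha|\le1$ is used only there. Let $(W,Y)$ be a weak solution, with driving Brownian motion $B:=Y-\alpha\int_0^\cdot\sgn(W_s)\,dL_s$, which (as in the filtering set-up of Section~\ref{s:ffam}) is independent of $W$. Since $W$ is continuous and adapted, $\sgn(W_\cdot)$ is a bounded predictable process, so Lemma~\ref{l:balayage} applies to it: with $\tilde X_t:=\sgn(W_{g_t(Y)})Y_t$ we get $\tilde X_t=\int_0^t\sgn(W_{g_s(Y)})\,dY_s$. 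The measure $dL$ is carried by $\{Y=0\}$, and there $g_s(Y)=s$, so $\sgn(W_{g_s(Y)})\sgn(W_s)=1$ on the support of $dL$; combining this with $dY=dB+\alpha\sgn(W_\cdot)\,dL$ gives $\tilde X_t=\beta_t+\alpha L_t$, where $\beta_t:=\int_0^t\sgn(W_{g_s(Y)})\,dB_s$ is an $\cF$-Brownian motion by Lévy's theorem. The drift of $Y$ is of finite variation, so $\langle Y\rangle_t=\langle\tilde X\rangle_t=t$, and since $|\tilde X|=|Y|$ the occupation times formula shows that the symmetric local times at $0$ of $\tilde X$ and $Y$ coincide, i.e. $L=L(\tilde X)$. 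Thus $\tilde X=\beta+\alpha L(\tilde X)$ solves (\ref{e:sbm}), and as $|\alpha|\le1$ Theorem~\ref{t:HS} identifies $\tilde X$ as a skew Brownian motion; this is the distributional statement in i).

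\emph{Existence and weak uniqueness.} For existence, take a Brownian motion $W$ and an independent skew Brownian motion $X$ (driven by a Brownian motion $B_0\perp W$; $X$ exists since $|\alpha|\le1$), and set $Y_t:=\sgn(W_{g_t(X)})X_t$. The same balayage computation, now with $g(Y)=g(X)$ and (again by the occupation times formula) $L(Y)=L(X)$, gives $Y_t=\beta_t+\alpha\int_0^t\sgn(W_s)\,dL_s(Y)$ with $\beta:=\int_0^\cdot\sgn(W_{g_s(X)})\,dB_{0,s}$; since $\langle\beta,W\rangle=\int_0^\cdot\sgn(W_{g_s(X)})\,d\langle B_0,W\rangle_s\equiv0$ and $\beta,W$ are Brownian motions, Knight's theorem gives $\beta\perp W$, so $(W,Y)$ solves (\ref{e:fam21}). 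For uniqueness, return to an arbitrary weak solution: the $\beta$ above satisfies $\langle\beta,W\rangle\equiv0$ because $\langle B,W\rangle\equiv0$, hence $\beta\perp W$ by Knight's theorem, and by the pathwise uniqueness in Theorem~\ref{t:HS} the skew Brownian motion $\tilde X$ is a measurable functional of $\beta$, so $\tilde X\perp W$. Therefore $(W,\tilde X)$ has the fixed law of a Brownian motion together with an independent skew Brownian motion, and since $Y_t=\sgn(W_{g_t(\tilde X)})\tilde X_t$ realises $Y$ as a measurable functional of $(W,\tilde X)$, the law of $(W,Y)$ is uniquely determined.

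\emph{Part ii).} Because $|Y|=|\tilde X|$ and $\tilde X\eid X$, it suffices to deal with $|X|$. The symmetric Tanaka formula gives $|X_t|=\int_0^t\ssgn(X_s)\,dX_s+L_t(X)=\int_0^t\ssgn(X_s)\,d\beta_s+L_t(X)$, since $\ssgn(0)=0$ while $dL(X)$ is carried by $\{X=0\}$; the integrand equals $\pm1$ for a.e.\ $s$, so $\int_0^\cdot\ssgn(X_s)\,d\beta_s$ is a Brownian motion and Skorokhod's lemma identifies $|X|$ as a reflecting Brownian motion with local time $L(X)$. For the two local times of $Y$, write $\ell^{-}$ for the left local time of $Y$ at $0$; comparing the right, left and symmetric Tanaka formulas at level $0$ gives $\ell_t-\ell^{-}_t=2\int_0^t\chf_{\{Y_s=0\}}\,dY_s$ and $L_t=\half(\ell_t+\ell^{-}_t)$, whence $\ell_t=L_t+\int_0^t\chf_{\{Y_s=0\}}\,dY_s$. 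In the last integral the $dB$-part is a continuous local martingale with bracket $\int_0^t\chf_{\{Y_s=0\}}\,ds=0$ (the zero set of $Y$ has zero Lebesgue measure) and $\chf_{\{Y_s=0\}}\,dL_s=dL_s$; hence $\ell_t=L_t+\alpha\int_0^t\sgn(W_s)\,dL_s=\int_0^t(1+\alpha\sgn(W_s))\,dL_s$.

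\emph{Main obstacle.} The balayage bookkeeping and the Tanaka identities are routine. The crux is the weak-uniqueness step, where the distributional uniqueness of the skew Brownian motion (Theorem~\ref{t:HS}) must be upgraded to uniqueness of the joint law of $(W,Y)$, and the key input for this is the independence $\tilde X\perp W$ obtained from Knight's theorem applied to the orthogonal Brownian motions $\beta$ and $W$. A little care is also needed in keeping the three local times of $Y$ apart and in verifying $L(Y)=L(\tilde X)$.
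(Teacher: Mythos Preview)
Your proof is correct and follows the same route as the paper's: balayage (Lemma~\ref{l:balayage}) to pass between $Y$ and the process $\tilde X:=\sgn(W_{g(Y)})Y$, Theorem~\ref{t:HS} to identify $\tilde X$ as a skew Brownian motion, the equality $L(Y)=L(\tilde X)$ via $|Y|=|\tilde X|$, and the Tanaka identities $\ell-\ell^{-}=2\int\chf_{\{Y=0\}}dY$ and $L=\tfrac12(\ell+\ell^{-})$ for part~ii). The paper simply cites \cite{BBKM} for $|X|$ reflecting; your Tanaka--Skorokhod derivation is the same content spelled out.

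The one place you go beyond the paper is the weak-uniqueness step. The paper's argument is terse: it observes the one-to-one correspondence $Y\leftrightarrow\tilde X$ and invokes uniqueness in law for (\ref{e:sbm}). You instead argue that $\langle\beta,W\rangle=0$ (which needs $\langle B,W\rangle=0$, i.e.\ $B\perp W$ as part of the notion of weak solution---implicit in the paper's framework but worth stating), deduce $\beta\perp W$ by L\'evy/Knight, and then use the \emph{pathwise} uniqueness in Theorem~\ref{t:HS} to conclude $\tilde X$ is $\sigma(\beta)$-measurable and hence $\tilde X\perp W$. This pins down the joint law of $(W,\tilde X)$, from which $Y=\sgn(W_{g(\tilde X)})\tilde X$ inherits a unique law. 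Your version makes explicit a point the paper leaves to the reader, and in fact anticipates the independence $\sgn(W_{g(Y)})Y\perp W$ that the paper only states later, in Theorem~\ref{t:strongY}. Two minor remarks: Knight's theorem is more than you need (L\'evy's characterisation of planar Brownian motion already gives $\beta\perp W$), and the applicability of Lemma~\ref{l:balayage} to $K=\sgn(W)$ uses that in a Brownian filtration optional and predictable coincide.
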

\begin{proof} Suppose $X$ is the skew Brownian motion that solves
  (\ref{e:sbm}). As observed in  Introduction, this SDE has a unique
  solution.  Next let $Y_t=\sgn(W_{g_t(X)})X_t$. Observe that $Y$ is
a continuous semimartingale in view of Lemma \ref{l:balayage} and $[X,X]_t=[Y,Y]_t=t$. Moreover,
$L(X)=L(Y)$.  Indeed, (see Exercise VI.1.25 in \cite{RY})
\[
L_t(X)=\lim_{\eps \rar 0}\int_0^t \chf_{[|X_t|< \eps]}(s)ds=\lim_{\eps
  \rar 0}\int_0^t \chf_{[|Y_t|< \eps]}(s)ds=L_t(Y).
\]
Thus, $Y$ satisfies
\bean
Y_t&=&\int_0^t \sgn(W_{g_s(X)})dB_s +\alpha \int_0^t\sgn(W_{g_s(X)})
dL_s(X) \\
&=&\beta_t +\alpha \int_0^t\sgn(W_{s})dL_s(X)\\
&=&\beta_t +\alpha \int_0^t\sgn(W_{s})dL_s(Y),
\eean
where $\beta:=\int_0^{\cdot} \sgn(W_{g_s(X)})dB_s$, the first equality
is due to Lemma \ref{l:balayage} and the second is due to the fact
that support of the measure $dL(X)$ is contained in the zero set of
$X$. This shows that $\sgn(W_{g_t(X)})X_t$ is a weak solution to
(\ref{e:fam21}). By working backwards one can also see that
$\sgn(W_{g(Y)})Y$ is a weak solution to (\ref{e:sbm}). Since  there is
a one-to-one correspondence between $Y$ and $\sgn(W_{g(Y)})Y$, we
obtain the uniqueness in law of the solutions to (\ref{e:fam21}) from
the analogous property of the solutions to (\ref{e:sbm}). Again, since
the solutions to (\ref{e:sbm}) are unique in law, we also have
$\sgn(W_{g(Y)})Y\eid X$. Therefore, $|Y|=|X|$. Since $|X|$ is a
reflecting Brownian motion (see, e.g., Lemma 2.1 in \cite{BBKM}), so is
$|Y|$.

To find the relationship between $\ell$ and $L$, first observe that
\[
\ell_t-\ell^{0-}_t=2\alpha \int_0^t \sgn(W_s)dL_s
\]
by Theorem VI.1.7 in \cite{RY}. Moreover, Exercise VI.1.25 in
\cite{RY} yields
\[
L_t=\frac{\ell_t+\ell^{0-}_t}{2}.
\]
Thus,
\[
\ell_t=\int_0^t\left(1+\alpha\, \sgn(W_s)\right)dL_s.
\]
\end{proof}
The equation (\ref{e:fam21}) in fact has a unique strong solution. We
need the following lemma for the proof.
\begin{lemma} \label{l:slt}
Suppose $X^i=M+A^i$ for $i=1,2$ where $X^i_0=0$,  $M$ is a continuous local
martingale and $A^i$ is continuous and of finite variation for each
$i$.
\begin{itemize}
\item[i)] If $X^i\geq 0$, then $L(X^i)=\int_0^{\cdot}
  \chf_{[X^i_s=0]}dX^i_s$ and $L(X^i)=\half \ell(X^i)$.
\item[ii)]  $2 L(X^{i^+})= L(X^i) +\int_0^{\cdot}
  \chf_{[X^i_s=0]}dX^i_s$.
\item[iii)] $L(X^1 \vee X^2)=\int_0^{\cdot} \chf_{[X^2_s \leq
    0]}dL_s(X^1)+\int_0^{\cdot} \chf_{[X^1_s < 0]}dL_s(X^2)$.
\end{itemize}
\end{lemma}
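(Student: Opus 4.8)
The plan is to treat each of the three identities as a special case of the standard Itô--Tanaka machinery, exploiting that all the processes involved differ only in their finite-variation parts, so that the relevant local times are local times of the \emph{same} martingale part $M$ up to the behaviour of the sign functions at the zero sets.

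For (i), assume $X^i \geq 0$. The Tanaka formula for $|X^i|$ reads $|X^i_t| = X^i_t = \int_0^t \sgn(X^i_s)\,dX^i_s + \ell_t(X^i)$ when one uses the right local time, but since $X^i \geq 0$ we have $\sgn(X^i_s) = 1$ off the zero set and $X^i = \int_0^{\cdot} dX^i_s$ trivially, so comparing gives $\ell_t(X^i) = \int_0^t \chf_{[X^i_s=0]}\,dX^i_s$; the occupation-times / symmetric-versus-right local time comparison (Exercise VI.1.25 in \cite{RY}) then gives $L(X^i) = \half\bigl(\ell(X^i)+\ell^{0-}(X^i)\bigr)$, and since $X^i$ never goes negative $\ell^{0-}(X^i)=0$, whence $L(X^i)=\half\ell(X^i)$. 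So the first step is really just bookkeeping between the three flavours of local time at a reflecting boundary.

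For (ii), apply Tanaka to $X^{i^+} = (X^i)^+$: the standard formula is $(X^i_t)^+ = \int_0^t \chf_{[X^i_s>0]}\,dX^i_s + \half\ell_t(X^i)$. Now $X^{i^+}\geq 0$, so by part (i) applied to $X^{i^+}$ (whose martingale part is $\int_0^\cdot \chf_{[X^i_s>0]}dM_s$) we also have $\ell_t(X^{i^+}) = \int_0^t \chf_{[X^{i^+}_s=0]}\,dX^{i^+}_s$. The sets $[X^{i^+}_s=0]$ and $[X^i_s \leq 0]$ agree, and on that set the increments of $X^{i^+}$ are carried by the local time at $0$; a short computation identifying $\int_0^t \chf_{[X^{i^+}_s=0]}\,dX^{i^+}_s$ with $\half\ell(X^i)+\half\int_0^t \chf_{[X^i_s=0]}dX^i_s$ then yields $2L(X^{i^+}) = \ell(X^{i^+}) = L(X^i) + \int_0^t \chf_{[X^i_s=0]}\,dX^i_s$ after again converting $L(X^{i^+})=\half\ell(X^{i^+})$ via (i). The bracketing here needs care because $X^i$ may spend positive time at $0$ and oscillate across it, so the three local times $\ell$, $\ell^{0-}$, $L$ genuinely differ for $X^i$ itself even though $X^{i^+}\geq 0$ collapses them for $X^{i^+}$.

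For (iii), write $X^1 \vee X^2 = X^2 + (X^1-X^2)^+$. Since $X^1 - X^2 = A^1 - A^2$ is of finite variation, its local time at $0$ \emph{vanishes}, so Tanaka gives $(X^1_t-X^2_t)^+ = \int_0^t \chf_{[X^1_s>X^2_s]}\,d(X^1_s-X^2_s)$, with no local-time term. Hence $X^1\vee X^2 = \int_0^t \chf_{[X^1_s\leq X^2_s]}\,dX^2_s + \int_0^t \chf_{[X^1_s>X^2_s]}\,dX^1_s$, so $X^1\vee X^2$ is again of the form (martingale)+(finite variation) with the same martingale $M$ off the coincidence set. Computing $L(X^1\vee X^2)$ via the symmetric Tanaka formula and using that on $[X^1\vee X^2 = 0]$ one is at a zero of whichever of $X^1, X^2$ is the larger, the contributions split as $\int_0^\cdot \chf_{[X^2_s\leq 0]}\,dL_s(X^1) + \int_0^\cdot \chf_{[X^1_s<0]}\,dL_s(X^2)$; the asymmetry between $\leq$ and $<$ is dictated by which Tanaka formula (right vs.\ symmetric) one uses and by the fact that $dL(X^i)$ is supported on $[X^i=0]$, on which set $[X^{3-i}\leq 0]$ and $[X^{3-i}<0]$ differ only on a $dL$-null set except at the common zeros, forcing exactly one strict and one non-strict inequality to avoid double-counting the set $[X^1_s=X^2_s=0]$.

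I expect the main obstacle to be (iii), specifically getting the indicator sets and the strict/non-strict inequalities exactly right so that the common zero set $[X^1_s = X^2_s = 0]$ is counted with the correct total weight and not twice; this is the kind of point where a naive symmetric-local-time computation produces a spurious factor of $\half$ or a missing term, and the cleanest fix is to systematically use the right local time $\ell$ throughout (for which $(X)^+$'s Tanaka formula has the clean coefficient $\chf_{[X>0]}$ and $\half\ell$), derive the identities for $\ell$, and only at the very end convert back to the symmetric $L$ via $L = \half(\ell+\ell^{0-})$ together with the finite-variation-part corrections recorded in Theorem VI.1.7 of \cite{RY}.
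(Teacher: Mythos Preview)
Your plan is essentially the paper's own proof: Tanaka for the positive part plus bookkeeping between $L$, $\ell$, and $\ell^{0-}$ handles (i) and (ii), and for (iii) the key input is exactly that $X^1-X^2=A^1-A^2$ is of finite variation so no local-time term appears in $(X^1-X^2)^+$. The only organizational difference is that the paper makes (iii) precise by first invoking (ii) to write $L(S)=2L(S^+)-\int\chf_{[S=0]}dS$ and then appealing to Exercise VI.1.21 in \cite{RY} for the right local time of the maximum $S^+=X^{1^+}\vee X^{2^+}$, which is the clean way to resolve the strict/non-strict inequality issue you anticipate; you should also watch the factor of two in your part (i) computation, since with the paper's convention $\sgn(0)=-1$ the Tanaka identity actually gives $\ell(X^i)=2\int\chf_{[X^i=0]}dX^i$, consistent with $L(X^i)=\half\ell(X^i)$.
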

\begin{proof}
\begin{itemize}
\item[i)] By Tanaka's formula for the symmetric local times (see
  Exercise VI.1.25 in \cite{RY}), we obtain
\be \label{e:nstanaka}
dX^{i^+}_t=\half\left\{2 \chf_{[X^i_t>0]}+\chf_{[X^i_t=0]}\right\}dX^i_t
+\half dL_t(X^i).
\ee
However, since $X^{i^+}=X^i$, we immediately deduce from the above
that
\[
L(X^i)_t=\int_0^{t}
  \chf_{[X^i_s=0]}dX^i_s.
\]
The second assertion follows from Exercise VI.1.16 in \cite{RY}.
\item[ii)] In view of the results from  part i) and (\ref{e:nstanaka})
\bean
dL(X^{i^+})&=&\half \chf_{[X^{i^+}_s=0]}\left(\left\{2 \chf_{[X^i_t>0]}+\chf_{[X^i_t=0]}\right\}dX^i_t
+\half dL_t(X^i)\right)\\
&=&\half \chf_{[X^i_t=0]}dX^i_t+\half dL(X^i)_t
\eean
since $\int_0^t \chf_{[X^i_s\neq 0]}dL_s(X^i)=0$.
\item[iii)] Let $S=X^1\vee X^2$ and observe that since $S=X^1 +
  (X^2-X^1)^+$, by Tanaka formula
\[
dS_t= dM_t + \chf_{[X^2_t>X^1_t]}dA^2_t + \chf_{[X^2_t\leq
  X^1_t]}dA^1_t.
\]
Thus, $S=M+C$ for where $C$ is continuous and of finite variation. By part ii)
\[
L_t(S)=2 L_t(S^+)-\int_0^t \chf_{[S_s=0]}dS_s.
\]
Then, by part i) and Exercise VI.1.21, we obtain
\bean
dL_t(S)&=&\chf_{[X^2_t \leq
    0]}d\ell_t(X^1)+\chf_{[X^1_t < 0]}d\ell_t(X^2)-
  \left(\chf_{[X^1_t=0, X^2_t\leq 0]}+\chf_{[X^2_t=0,X^1_t<0]}\right)dS_t\\
&=&\chf_{[X^2_t \leq
    0]}d\ell_t(X^1)+\chf_{[X^1_t < 0]}d\ell_t(X^2)-
 \left(\chf_{[X^1_t=0, X^2_t\leq
     0]}+\chf_{[X^2_t=0,X^1_t<0]}\right)dC_t\\
&=&\chf_{[X^2_t \leq
    0]}\left\{d\ell_t(X^1)-\chf_{[X_t^1=0]}dA^1_t\right\}+\chf_{[X^1_t<0]}\left\{d\ell_t(X^2)-\chf_{[X^2_t=0]}dA^2_t\right\}\\
&=&\chf_{[X^2_t \leq
    0]}dL_t(X_1)+ \chf_{[X^1_t<0]}dL_t(X^2),
\eean
where the second line is due to Theorem VI.1.7
from \cite{RY}  and the last line follows from the same theorem and  Exercise VI.1.25 in \cite{RY}.
\end{itemize}
\end{proof}
\begin{theorem} \label{t:strongY} Pathwise uniqueness holds for
  (\ref{e:fam21}). Consequently, there is a unique strong
  solution. Moreover, $\sgn(W_{g(Y)})Y$ is a skew Brownian  motion independent of $W$.
\end{theorem}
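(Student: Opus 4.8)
The plan is to reduce pathwise uniqueness for (\ref{e:fam21}) to a comparison/local-time estimate in the spirit of the classical proof of pathwise uniqueness for skew Brownian motion. Given the driving Brownian motion $B$ and the "signal" process $W$, suppose $Y^1$ and $Y^2$ are two strong solutions on the same probability space. Both have the form $Y^i = B + \alpha\int_0^\cdot \sgn(W_s)\,dL_s(Y^i)$, so they share the martingale part $B$ and differ only through finite-variation terms supported on $\{Y^i=0\}$. I would first record, using Proposition \ref{p:fam2weak}(i) and the established uniqueness in law, that any solution $Y$ satisfies $|Y| = |X|$ for a skew Brownian motion $X$ solving (\ref{e:sbm}); in particular $|Y^1|$ and $|Y^2|$ are each reflecting Brownian motions built from $B$, hence (by pathwise uniqueness of the Skorokhod reflection, or by the strong solution property of skew Brownian motion applied to $|X|$) we actually have $|Y^1| = |Y^2|$ pathwise. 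Thus the two solutions can differ only in sign, excursion interval by excursion interval.

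The key step is then to show they agree on each excursion. The idea is to look at $Y^1 \vee Y^2$ and $Y^1 \wedge Y^2$, or equivalently to control $(Y^1 - Y^2)^+$. Using Lemma \ref{l:slt}(iii) with $X^i = Y^i$ (both of the form $M + A^i$ with $M = B$), one computes the local time of $Y^1 \vee Y^2$ at $0$ in terms of $dL(Y^1)$ and $dL(Y^2)$ restricted to the appropriate sign sets, and similarly Lemma \ref{l:slt}(ii) relates these to $\int \chf_{[Y^i=0]}dY^i$. The point of introducing $W$ is that the drift coefficient $\alpha\sgn(W_s)$ is the \emph{same function of $s$} for both solutions — it does not depend on $Y^i$ — so when one writes the Tanaka expansion of $Y^1\vee Y^2 - Y^1$ and of $Y^1 - Y^1\wedge Y^2$ the contributions from the $dL$ terms can be matched against each other on $\{Y^1=0\}\cap\{Y^2=0\}$ (which, by the previous paragraph, is where all the local time lives, since $|Y^1|=|Y^2|$). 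Tracking signs carefully, I expect to obtain that $(Y^1-Y^2)^+$ is a nonnegative continuous semimartingale whose local martingale part vanishes and whose finite-variation part is $\leq 0$, forcing $Y^1 \leq Y^2$; by symmetry $Y^1 = Y^2$. An alternative, possibly cleaner route: transfer everything through $Z^i := \sgn(W_{g(Z^i)})Y^i$, note via balayage that $Z^i$ solves (\ref{e:sbm}) with the \emph{same} $B$ (as in the proof of Proposition \ref{p:fam2weak}), invoke Harrison--Shepp pathwise uniqueness to get $Z^1 = Z^2$, and then deduce $Y^1 = Y^2$ from the one-to-one correspondence between $Y$ and $\sgn(W_{g(Y)})Y$.

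Once pathwise uniqueness is in hand, the existence of a strong solution follows from the Yamada--Watanabe theorem combined with the weak existence already proved in Proposition \ref{p:fam2weak}(i). The final assertion, that $\sgn(W_{g(Y)})Y$ is a skew Brownian motion independent of $W$, then comes almost for free: Proposition \ref{p:fam2weak}(i) gives $\sgn(W_{g(Y)})Y \eid X$, so it is a skew Brownian motion in law; and since (via balayage) $\sgn(W_{g_t(Y)})Y_t = \sgn(W_{g_t(Y)})B_t + \alpha L_t$ is built from $B$ and from local time of $|Y|=|X|$ which is a deterministic functional of $B$ alone, the process $\sgn(W_{g(Y)})Y$ is in fact a measurable functional of $B$. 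Since $B$ is independent of $W$ by the weak-solution construction in Proposition \ref{p:fam2weak}, independence of $\sgn(W_{g(Y)})Y$ from $W$ follows. (If one prefers to argue without the explicit functional representation, one can instead check that $\sgn(W_{g(Y)})Y$ solves (\ref{e:sbm}) driven by the $B$ from the weak solution, use strong uniqueness for (\ref{e:sbm}) to identify it as the unique strong solution, and conclude it is $\sigma(B)$-measurable.)

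The main obstacle I anticipate is the sign bookkeeping in the comparison argument: because the drift coefficient carries the factor $\sgn(W_s)$, which can be either sign on different excursions, one must be careful that the "pushing" provided by the local time term still has the monotonicity needed for the comparison to go through — this is exactly the place where the constraint $|\alpha|\leq 1$ (already imposed) is used, mirroring its role in Harrison--Shepp. For this reason I would lean toward the second route (transfer to (\ref{e:sbm}) via balayage and quote Harrison--Shepp), which isolates all the delicate local-time analysis into the already-cited strong uniqueness for skew Brownian motion and reduces the new work to the elementary observation that $Y \mapsto \sgn(W_{g(Y)})Y$ is a bijection preserving the relevant filtrations and local times.
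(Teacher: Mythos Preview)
Both routes you propose for pathwise uniqueness have gaps. In the comparison route, the hoped-for conclusion that $(Y^1-Y^2)^+$ has non-positive finite-variation part fails: since $Y^1-Y^2$ is of finite variation, Tanaka's formula gives
\[
(Y^1-Y^2)^+_t=\alpha\int_0^t\sgn(W_s)\Bigl(\chf_{[Y^1_s=0,\,Y^2_s<0]}\,dL_s(Y^1)-\chf_{[Y^2_s=0,\,Y^1_s>0]}\,dL_s(Y^2)\Bigr),
\]
and the factor $\sgn(W_s)$ destroys any monotonicity; this is not rescued by $|\alpha|\le 1$. Your preliminary claim $|Y^1|=|Y^2|$ is likewise unjustified: the martingale driving $|Y^i|$ in its Skorokhod representation is $\int_0^\cdot\ssgn(Y^i_s)\,dB_s$, which depends on the unknown $Y^i$, so one cannot invoke uniqueness of reflection with a common input. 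In the transfer route, balayage yields $Z^i:=\sgn(W_{g(Y^i)})Y^i$ satisfying $Z^i_t=\beta^i_t+\alpha L_t(Z^i)$ with $\beta^i=\int_0^\cdot\sgn(W_{g_s(Y^i)})\,dB_s$; the driving Brownian motions $\beta^1$ and $\beta^2$ are \emph{different} unless the zero sets of $Y^1$ and $Y^2$ already coincide, so Harrison--Shepp pathwise uniqueness cannot be applied to conclude $Z^1=Z^2$. The paper's argument uses the very computation you set up with Lemma~\ref{l:slt}(iii), but draws a different conclusion: it shows that $Y^1\vee Y^2$ is \emph{itself} a solution of (\ref{e:fam21}). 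Weak uniqueness (Proposition~\ref{p:fam2weak}) then gives $Y^1\vee Y^2\eid Y^1$, and since $Y^1\vee Y^2\ge Y^1$ pointwise, equality follows (Le~Gall's device).

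Your independence argument also contains an error. Balayage gives
\[
X_t:=\sgn(W_{g_t(Y)})Y_t=\int_0^t\sgn(W_{g_s(Y)})\,dB_s+\alpha L_t(X)=\beta_t+\alpha L_t(X),
\]
not $\sgn(W_{g_t(Y)})B_t+\alpha L_t$; hence $X$ is adapted to the filtration of $\beta$, not of $B$, and $\beta$ visibly involves $W$ through $g(Y)$. The point you are missing, and which the paper supplies, is that $[\beta,W]=\int_0^\cdot\sgn(W_{g_s(Y)})\,d[B,W]_s=0$, so $\beta$ and $W$ are independent Brownian motions; strong uniqueness for (\ref{e:sbm}) then makes $X$ measurable with respect to $\sigma(\beta)$ and hence independent of $W$.
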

\begin{proof}
Suppose there are two solutions, $Y^1$ and $Y^2$. Then,
\bean
d(Y^1 \vee Y^2)_t&=& dB_t +\alpha \sgn(W_t)dL_t(Y^1) +
\chf_{[Y^2_t>Y^1_t]}d(Y^2-Y^1)_t\\
&=&dB_t +\alpha \sgn(W_t)dL_t(Y^1) +
\alpha
\chf_{[Y^2_t>Y^1_t]}\sgn(W_t)\left\{dL_t(Y^2)-dL_t(Y^1)\right\}\\
&+&dB_t +\alpha \sgn(W_t)dL_t(Y^1) +
\alpha\chf_{[Y^1_t<0]}\sgn(W_t)dL_t(Y^2)-\alpha\chf_{[Y^2_t>0]}\sgn(W_t)dL_t(Y^1)\\
&=&dB_t +\alpha \chf_{[Y^2_t\leq 0]}\sgn(W_t)dL_t(Y^1) +
\alpha\chf_{[Y^1_t<0]}\sgn(W_t)dL_t(Y^2)\\
&=&dB_t +\alpha\sgn(W_t)dL_t(Y^1 \vee Y^2).
\eean
Thus, $Y^1 \vee Y^2$ is also a solution to (\ref{e:fam21}). However,
since weak uniqueness holds for  (\ref{e:fam21}), we conclude that
$Y^1=Y^2$. Since weak existence and pathwise uniqueness implies the
existence and uniqueness of the strong solutions by the celebrated
Yamada-Watanabe theorem, the second claim follows.

In order to see the claimed independence, let $X=\sgn(W_{g(Y)})Y$. As
observed earlier, due to the balayage formula,
\[
X_t= \beta_t + \alpha L_t(X)
\]
where $\beta$ is a Brownian motion defined by
$\int_0^{\cdot}\sgn(W_{g_s(Y)})\,dB_s$. By Theorem \ref{t:HS}, $X$ is
adapted to the natural filtration of $\beta$. However, $\beta$ is
independent of $W$ since $[W,\beta]=0$.
\end{proof}
The theorem above tells us in particular that the zero set of $Y$ is that of a skew
Brownian motion which is independent of $W$. This will greatly
simplify our computations when we consider the $\cF^Y$-optional
projection of $W$, where $\cF^Y$ is the usual augmentation of the
natural filtration of $Y$ and $Y$ is the unique strong solution of
(\ref{e:fam21}).

For any $t\geq 0$ define the stopping time
\[
d_t(Y)=\inf\{u>t:Y_u=0\}.
\]
Then, we have the following
\begin{proposition} \label{p:sgnisknown} For any $t \geq 0$,
  $\sgn(W_{g_t(Y)})$ is $\cF^Y_t$-measurable. Similarly, $\sgn(W_{d_t(Y)})$ is $\cF^Y_{d_t}$-measurable.
\end{proposition}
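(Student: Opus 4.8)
The plan is to exploit the representation of $Y$ from Theorem~\ref{t:strongY}. Recall that $X := \sgn(W_{g(Y)})Y$ is a skew Brownian motion solving $X_t = \beta_t + \alpha L_t(X)$ with $\beta = \int_0^\cdot \sgn(W_{g_s(Y)})\,dB_s$ a Brownian motion independent of $W$. Because $X$ is adapted to the natural filtration of $\beta$ and $|X| = |Y|$, the zero sets of $X$ and $Y$ coincide, so $g_t(Y) = g_t(X)$ and $d_t(Y) = d_t(X)$; in particular these are stopping times of the filtration generated by $\beta$, hence independent of $W$. The key point I would isolate is that $\cF^Y$ and $\cF^X$ are in fact the \emph{same} filtration: the map $Y \mapsto \sgn(W_{g(Y)})Y$ is its own inverse (applying it twice gives back $Y$ since $g(X) = g(Y)$), so each of $X$, $Y$ is a measurable functional of the other, and the usual augmentations agree. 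Thus it suffices to show $\sgn(W_{g_t(X)})$ is $\cF^X_t$-measurable and $\sgn(W_{d_t(X)})$ is $\cF^X_{d_t}$-measurable.

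For the first assertion I would argue directly on excursion intervals. On the event $\{g_t(Y) < t\}$ the point $g_t(Y)$ is the left endpoint of the excursion of $|Y|$ straddling $t$, and throughout the open excursion interval $(g_t(Y), d_t(Y))$ the process $Y$ does not vanish, so $\sgn(Y_s)$ is constant there; call this common value $\sigma$, which is manifestly $\cF^Y_t$-measurable (read off, say, from $\sgn(Y_t)$ on $\{g_t(Y)<t\}$, and trivially on $\{g_t(Y)=t\}$ where $g_t(Y)=t$ means $Y_t=0$ and the claim degenerates). Now on that same interval $Y_s = \sgn(W_{g_s(Y)})X_s = \sgn(W_{g_t(Y)})X_s$, because $g_s(Y) = g_t(Y)$ is frozen during the excursion; and $X_s$ also keeps a constant sign $\sigma'$ on the interval (again because $|X|=|Y|$ does not vanish there). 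Hence $\sigma = \sgn(W_{g_t(Y)})\,\sigma'$, i.e. $\sgn(W_{g_t(Y)}) = \sigma\,\sigma'$. Since $\sigma$ is $\cF^Y_t$-measurable and $\sigma' = \sgn(X_t)$ on $\{g_t(Y)<t\}$ is $\cF^X_t = \cF^Y_t$-measurable, the product is $\cF^Y_t$-measurable, proving the first claim. (When $Y_t = 0$ one has $g_t(Y) = t$ and the right-continuity of the filtration together with the analogous statement at times just before lets one pass to the limit, or one simply notes the value $\sgn(W_t)=\sgn(W_{g_t(Y)})$ is recovered from the immediately following excursion.)

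For the second assertion, $d_t(Y)$ is an $\cF^Y$-stopping time at which $Y$ vanishes, and $g_{d_t}(Y) = g_{d_t}(X)$ is the left endpoint of the excursion just completed. Applying the first part's identity at a time $s$ slightly less than $d_t$ inside that excursion gives $\sgn(W_{g_{d_t}(Y)}) = \sgn(Y_s)\sgn(X_s)$, which is $\cF^Y_{s}$-measurable, hence $\cF^Y_{d_t-}$-measurable and a fortiori $\cF^Y_{d_t}$-measurable. To then reach $\sgn(W_{d_t})$ itself, I would use that $d_t$ is a zero of $X$ and that immediately after $d_t$ the process $X$ starts a fresh excursion whose sign $\sgn(X_{d_t+})$ is $\cF^X_{d_t+} = \cF^X_{d_t}$-measurable; combined with $X_s = \sgn(W_{g_s})Y_s$ evaluated just after $d_t$ (where now $g_s(Y) = d_t$), this yields $\sgn(W_{d_t}) = \sgn(X_{d_t+})\,\sgn(Y_{d_t+})$, both factors being $\cF^Y_{d_t}$-measurable.

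The main obstacle I anticipate is the bookkeeping at the boundary times — when $Y_t=0$ (so $g_t(Y)=t$) for part one, and making the ``just before / just after $d_t$'' limiting arguments rigorous. These are not deep, but they require care: one should phrase everything in terms of right endpoints of excursions and use the right-continuity of the augmented filtration $\cF^Y$, together with the fact (from excursion theory for the reflecting Brownian motion $|Y|$) that the excursion intervals are well-defined and their endpoints are $\cF^Y$-stopping times. A clean way to package the limiting step is to note that $s \mapsto \sgn(Y_{g_s(Y)+})\sgn(X_{g_s(Y)+})$ is constant on each excursion interval and equals $\sgn(W_{g_s(Y)})$ there, so it defines an $\cF^Y$-optional process whose value at $t$ (resp.\ at $d_t$) gives exactly the two measurability claims.
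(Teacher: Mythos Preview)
Your argument has a circularity at its core. You assert that $\cF^X=\cF^Y$ on the grounds that ``the map $Y\mapsto\sgn(W_{g(Y)})Y$ is its own inverse, so each of $X$, $Y$ is a measurable functional of the other.'' But this map is not a functional of the path of $Y$ alone: it uses the auxiliary process $W$. Concretely, to know that $X_t=\sgn(W_{g_t(Y)})Y_t$ is $\cF^Y_t$-measurable you need precisely that $\sgn(W_{g_t(Y)})\in\cF^Y_t$, which is the statement you are trying to prove. Your identity $\sgn(W_{g_t(Y)})=\sgn(Y_t)\sgn(X_t)$ on $\{Y_t\neq 0\}$ is correct (the paper records it as $Y_t/X_t$), but invoking $\sgn(X_t)\in\cF^Y_t$ at this stage is exactly the circular step. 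In fact $\cF^X\neq\cF^Y$: by Theorem~\ref{t:strongY} the skew Brownian motion $X$ is \emph{independent} of $W$, so $\sgn(W_{g_t(X)})$ cannot be $\cF^X_t$-measurable, and hence $Y=\sgn(W_{g(X)})X$ is not $\cF^X$-adapted. What one actually gets (and what the paper uses later) is the one-sided inclusion $\cF^Y_t\subset\cF^X_t\vee\cG_{g_t(Y)}$.

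The paper breaks the circle with an input you have not used: the relation $\ell_t=\int_0^t(1+\alpha\,\sgn(W_s))\,dL_s$ between the right and symmetric local times of $Y$ (Proposition~\ref{p:fam2weak}(ii)). Both $\ell$ and $L$ are local times of $Y$ and hence $\cF^Y$-adapted; comparing the $\cF^Y$-optional projection of the right-hand side with the left-hand side forces ${}^o\sgn(W)_s=\sgn(W_s)$ $dL$-a.e., i.e.\ on the zero set of $Y$. That is what gives $\sgn(W_{g_t(Y)})\in\cF^Y_{g_t}$ without presupposing anything about $X$. Only \emph{after} this does the paper record $\sgn(W_{g_t(Y)})=Y_t/X_t$ and conclude that $X$ is $\cF^Y$-adapted. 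For the $d_t$ part, your ``just after $d_t$'' reasoning is also shaky because $d_t$ is not an isolated zero (the zero set of $|Y|$ is perfect), so $\sgn(X_{d_t+})$ is not well defined; the paper instead uses the genuine stopping times $T_t^n=\inf\{u\geq d_t:|Y_u|=1/n\}\downarrow d_t$, applies the already-established identity there, and passes to the limit via right-continuity of $\cF^Y$ together with $\bbP(W_{d_t}=0)=0$ (which uses the independence of $d_t$ and $W$).
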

\begin{proof} We will first show that $\sgn(W_{g_t(Y)})$ is $\cF^Y_t$-measurable. Since $\ell$ is $\cF^Y$-adapted, we have that
\[
\int_0^t\left(1+\alpha \sgn(W_s)\right)dL_s \in \cF^Y_t
\]
by Proposition \ref{p:fam2weak}. Moreover, since $\ell$ is
$\cF^Y$-optional and $L$ is  $\cF^Y$-adapted and increasing, the
$\cF^Y$-optional projection of $\int_0^{\cdot}\eta_sd\ell_s$, for any
bounded
$\cF^Y$-optional $\eta$, is given by
\[
\int_0^{\cdot}\eta_s\left(1+\alpha {}^o\sgn(W)_s\right)dL_s,
\]
where ${}^o\sgn(W)$ stands for the $\cF^Y$-optional
projection of $\sgn(W)$. Thus, we have
\[
\int_0^{\infty}\eta_s( {}^o\sgn(W)_s-\sgn(W_s))dL_s=0
\]
for any bounded $\cF^Y$-optional $\eta$. Thus, ${}^o\sgn(W)_s=\sgn(W_s)$ if $s$ belongs to the support of $dL$. On the other hand, by Proposition \ref{p:fam2weak}, $|Y|$ is a reflecting Brownian motion. Therefore, the support of $dL$ is `exactly' the zero set of $Y$ (see Proposition VI.2.5 in \cite{RY}). Since $Y_{g_t(Y)}=0$ we deduce that $\sgn(W_{g_t(Y)}) \in \cF_{g_t(Y)}$ since ${}^o\sgn(W)_{g_t(Y)}\in \cF_{g_t(Y)}$. This also implies that
\be \label{e:sgnratio}
\chf_{[Y_t \neq 0]}\sgn(W_{g_{t}(Y)})=\chf_{[Y_t \neq 0]}\frac{Y_t}{X_t}
\ee
where $X$ is a skew Brownian motion adapted to $\cF^Y$ in view of Theorem \ref{t:strongY}.

Next,  consider the sequence of following stopping times:
\[
T_t^n=\inf\{u\geq d_t:|Y_u|=\frac{1}{n}\}.
\]
Clearly, $T^n_t$ is decreasing in $n$ and $\lim_{n \rar \infty}T_t^n=d_t$. Then, by (\ref{e:sgnratio})
\[
\liminf_{n \rar \infty} \sgn(W_{g_{T_t^n}(Y)})=\liminf_{n \rar \infty} \frac{Y_{T^n_t}}{X_{T^n_t}}.
\]
Next, we will show that $\liminf_{n \rar \infty} \sgn(W_{g_{T_t^n}(Y)})=\sgn(W_{d_t}),\, \bbP-$a.s.. To this end, first observe that if $u_n \downarrow u$ then $\sgn(W_{u_n})\rar \sgn(W_u)$ unless $W_u=0$ by the continuity of $u$ and the shape of the $\sgn$ function. Also note that since the mapping $t \mapsto g_t(Y)$ is right continuous, $\lim_{\rar \infty}g_{T_t^n}(Y)=g_{d_t}(Y)=d_t$. However, $d_t$ is independent of $W$ since it is an $\cF^X$-stopping time in view of Theorem \ref{t:strongY}. Thus, $\bbP(W_{d_t}=0)=0$, which in turn yields that
\[
\sgn(W_{d_t})=\liminf_{n \rar \infty} \sgn(W_{g_{T_t^n}(Y)})=\liminf_{n \rar \infty} \frac{Y_{T^n_t}}{X_{T^n_t}} \in \cF^Y_{d_t}
\]
by the right-continuity of the filtration $\cF^Y$ and the fact that $X$ is $\cF^Y$-adapted. Since the filtration is completed by the $\bbP$-null sets, we therefore conclude $\sgn(W_{d_t}) \in \cF^Y_{d_t}$.
\end{proof}
The above result shows that by observing $Y$ we learn the sign of $W$
at the end of every excursion interval of $Y$ (or alternatively of $X$). Let's denote the
 $\cF^Y$-optional projection of $W$ by $\hat{\nu}$. We  call this
 martingale {\em the filtered Az\'ema martingale of the second  kind}.
\begin{corollary} \label{c:nhat} $\hat{\nu}_t=\sgn(W_{g_t(Y)})\sqrt{g_t(Y)}$.
 \end{corollary}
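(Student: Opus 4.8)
The plan is to compute the optional projection $\hat\nu_t = {}^o W_t$ directly, conditioning first on the skew-Brownian-motion structure already isolated in Theorem~\ref{t:strongY}. Write $X = \sgn(W_{g(Y)})Y$, so that $X$ is a skew Brownian motion adapted to $\cF^Y$ and independent of $W$. The key structural fact is that, on the set $[Y_t\neq 0]$, we have $W_{g_t(Y)}$ known up to sign through \eqref{e:sgnratio}, namely $\sgn(W_{g_t(Y)}) = Y_t/X_t$, while on $[Y_t = 0]$ one has $g_t(Y)=t$ and $W_t = W_{g_t(Y)}$, whose conditional law given $\cF^Y_t$ is symmetric (it lies in the zero set of $X$, which is independent of $W$). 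So the whole problem reduces to understanding, on $[Y_t\neq 0]$, the conditional law of $W_t$ given $\cF^Y_t$.

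First I would argue that, conditionally on $\cF^Y_t$, the relevant information about $W$ is exactly $g_t(Y)$ together with the sign $\sgn(W_{g_t(Y)})$: the process $X$ is independent of $W$, hence so is the random time $g_t(Y)$, and by Proposition~\ref{p:sgnisknown} the sign $\sgn(W_{g_t(Y)})$ is $\cF^Y_t$-measurable. The decomposition $\cF^Y_t$ "carries" precisely $(g_t(Y), \sgn(W_{g_t(Y)}))$ as far as $W$ is concerned — more precisely, I would show that $\cF^Y_t$ is conditionally independent of $\sigma(W_s; s\le t)$ given these two quantities, by using the independence of $W$ and $X$ and the fact that $g_t(Y)$ is a function of $X$ alone. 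Then
\[
\hat\nu_t = \bbE[W_t \mid \cF^Y_t] = \bbE\big[W_t \,\big|\, g_t(Y), \sgn(W_{g_t(Y)})\big]
\]
on $[Y_t\neq 0]$. Now use the elementary fact that on $[g_t(Y)=s]$, $(W_u)_{u\ge s}$ is, conditionally, a Brownian motion started from $W_s$ with $W_s$ of sign $\sgn(W_{g_t(Y)})$ and, crucially, never returning to $0$ on $(s,t]$ — wait, that last point needs care: the zeros of $W$ are not the zeros of $Y$. Instead, the cleaner route is the Girsanov/balayage computation already used twice in the paper: take the measure $\bbQ$ under which $Y$ is a Brownian motion independent of $W$, note $\sgn(W_{g_t(Y)})Y_t = \sgn(W_{g_t(Y)})X_t|_{\text{law}}$, and compute $\bbE[W_t\mid \cF^Y_t]$ as in the proof of the moment theorem of Section~\ref{s:ffam}, but now with $\alpha$ acting only through local time. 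Because $dL$ is supported on $\{Y=0\}$ and the Girsanov exponent is $\alpha\int_0^t\sgn(W_s)\,dL_s$, the exponent only "sees" $W$ on the zero set of $Y$, where its sign contributes symmetrically; the upshot is that under conditioning the law of $W_t$ is simply that of $\sgn(W_{g_t(Y)})|\tilde W_{g_t(Y)}|$ where $\tilde W$ is an independent Brownian motion, giving $\bbE[W_t\mid\cF^Y_t] = \sgn(W_{g_t(Y)})\,\bbE|\tilde W_{g_t(Y)}|$. Since $\bbE|\tilde W_s| = \sqrt{2s/\pi}$...

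Here I should pause: the claimed answer is $\sqrt{g_t(Y)}$, not $\sqrt{2g_t(Y)/\pi}$. The discrepancy means the conditional law of $W_t$ is \emph{not} a signed $|\tilde W_{g_t(Y)}|$; rather the correct statement must be that, conditionally on $\cF^Y_t$, $W_t \eid \sgn(W_{g_t(Y)})\,|N|$ with a different normalisation — concretely, the Az\'ema-type computation \eqref{e:am} suggests that at the \emph{end} of a Brownian excursion one is at $0$, while at an interior time conditioning on "last zero before $t$ was at $s$" gives a Brownian meander on $[s,t]$, whose terminal value has mean $\sqrt{\pi(t-s)/2}$, not $\sqrt{2(t-s)/\pi}$ — but $g_t(Y)$ is the last zero of $Y$, not of $W$, so this meander heuristic does not apply either. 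The right computation is: on $[g_t(Y)=s]$, the conditional law of $W_t$ given $\cF^Y_t$ equals the law of $W_t$ given $W_s$ has a prescribed sign and nothing else (since $Y$ and $W$ are independent, the event $\{g_t(Y)=s\}$ tells us nothing about the path of $W$ beyond what is already encoded); so $W_t \mid \cF^Y_t \eid \sgn(W_{g_t(Y)})\cdot(\text{half-normal with variance }g_t(Y))$? That again gives $\sqrt{2/\pi}$. So the genuinely delicate point — and the main obstacle — is identifying exactly \emph{which} conditioning on $W$ is induced by $\cF^Y_t$: it cannot merely be "$\sgn(W_{g_t(Y)})$ is known," for that yields the wrong constant; there must be additional information, and I suspect it is that $\cF^Y_t$ also reveals $W_{g_t(Y)} = 0$-\emph{adjacent} behaviour via the local time $L$, or that $\hat\nu$ should be obtained not at time $t$ but by the stated formula interpreted through the jump structure (the projection "changes only by jumps which may only occur at the end of an excursion interval"), so that one computes $\hat\nu_{d_t}$ using $\sgn(W_{d_t})\in\cF^Y_{d_t}$ and $W_{d_t}\eid \sqrt{d_t}\,N$ conditionally, then interpolates. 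Resolving this normalisation — pinning down the precise conditional law of $W_t$ under $\cF^Y_t$ — is where the real work lies; once that law is identified, the corollary is immediate from $\bbE[W_t\mid\cF^Y_t] = \sgn(W_{g_t(Y)})\sqrt{g_t(Y)}$ on $[Y_t\neq 0]$ together with the symmetry argument giving $\hat\nu_t = 0 = \sgn(W_{g_t(Y)})\sqrt{g_t(Y)}$ on $[Y_t=0]$ where $g_t(Y)=t$ but $W_t$ has symmetric law so the projection vanishes — and indeed $\sqrt{g_t(Y)}=\sqrt t \neq 0$ there, so even the $[Y_t=0]$ case requires that $\{Y_t=0\}$ have zero Lebesgue measure in $t$, making the formula hold only $dt$-a.e., which is all an optional projection needs.
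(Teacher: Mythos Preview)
Your proposal has a genuine gap, and you correctly sense it yourself when the constant refuses to come out right. The error is the claim that ``$\cF^Y_t$ carries precisely $(g_t(Y), \sgn(W_{g_t(Y)}))$ as far as $W$ is concerned.'' This is false: by Proposition~\ref{p:sgnisknown}, the filtration $\cF^Y_t$ knows $\sgn(W_{g_s(Y)})$ for \emph{every} $s\le t$, i.e.\ it knows $\sgn(W_u)$ for all $u$ in the zero set of $Y$ intersected with $[0,t]$. Since that zero set is the zero set of a reflecting Brownian motion (Proposition~\ref{p:fam2weak}), it has full topological closure in itself and accumulates everywhere on $[0,g_t(Y)]$; by right-continuity of $\sgn(W_\cdot)$ away from zeros of $W$, this effectively reveals $\sgn(W_u)$ for Lebesgue-a.e.\ $u\le g_t(Y)$. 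So $\cF^Y_t$ contains essentially all of $\cG_{g_t(Y)}$, not just the single sign at time $g_t(Y)$. Your half-normal computation conditions on far too little, which is why it undershoots.

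The paper's resolution is to use the Az\'ema martingale as an intermediate tower step. One first observes the inclusion $\cF^Y_t\subset \cF^X_t\vee\cG_{g_t(Y)}$ and, using independence of $X$ and $W$, computes
\[
\bbE\bigl[W_t\,\big|\,\cF^X_t\vee\cG_{g_t(Y)}\bigr]=\mu_{g_t(Y)}=\sgn(W_{g_t(Y)})\,c\,\sqrt{g_t(Y)-\gamma_{g_t(Y)}},
\]
where $c$ is the Az\'ema constant from \eqref{e:am}. Now only $\gamma_{g_t(Y)}$ (the last zero of $W$ before $g_t(Y)$) is not $\cF^Y_t$-measurable, and the remaining work is to show that $\gamma_{g_t(Y)}$ is independent of $\cF^Y_t$; this uses the Brownian-bridge decomposition of $(W_{s\gamma_u}/\sqrt{\gamma_u})_{s\in[0,1]}$ and Lemme~1 of \cite{AY}. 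With that independence in hand, the projection reduces to integrating $\sqrt{g_t(Y)-\gamma_{g_t(Y)}}$ against the arcsine law of $\gamma_{g_t(Y)}/g_t(Y)$, which produces the stated $\sqrt{g_t(Y)}$. The step you were missing is precisely this insertion of $\cG_{g_t(Y)}$: it packages all the sign information at once and reduces the unknown to the single scalar $\gamma_{g_t(Y)}$.
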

 \begin{proof} Let $X=\sgn(W_{g(Y)})Y$ and recall that  $\cG$ is the usual augmentation of the natural filtration of $\sgn(W)$. Then, in view of Proposition \ref{p:sgnisknown} and  Theorem \ref{t:strongY}, we obtain $\cF^Y_t \subset \cF^X_t \vee \cG_{g_t(Y)}$. To ease the exposition let's denote $g_t(Y)$ with $g_t$. Since $X$ is independent of the filtration $\cG$ and $g_t(Y)=g_t(X)$,
 \be \label{e:ce1}
 \bbE[W_t|\cF^Y_t]=\bbE[\mu_{g_t}|\cF^Y_t]=\sgn(W_{g_t})\sqrt{\frac{\pi}{2}}\bbE\left[\sqrt{g_t-\gamma_{g_t}}\,\big|\cF^Y_t\right],
 \ee
 where $\gamma$ is as in (\ref{e:gamma}). On the other hand, Exercise XII.3.8 in \cite{RY} and the scaling properties of standard Brownian motions together imply that, for any $u$, the process $\left(\frac{W_{s \gamma_u}}{\sqrt{\gamma_u}}\right)_{s \in [0,1]}$ is a Brownian bridge independent of $\gamma_t$. Since $\sgn(W_{s\gamma_u})=\sgn(\frac{W_{s \gamma_u}}{\sqrt{\gamma_u}})$, this yields that $\gamma_u$ is independent of $\sgn(W_r)$ whenever $r \leq \gamma_u$. Moreover, Lemme 1 in \cite{AY} further implies that $\sgn(W_u)$ is independent of $\gamma_u$. Combining these two observations allows us to deduce that  $\gamma_{g_t}$ is independent of $\sigma(\sgn(W_{g_s}), g_s; s \leq t)$ since $(g_s)_{s \geq 0}$ is independent of $W$ by Theorem \ref{t:strongY}. (Recall once again that that $\bbP(\gamma_{g_t}=g_t)=0$ in view of the independence of $W$ and $g$.) Therefore, (\ref{e:ce1}) can be rewritten as
 \[
 \bbE[W_t|\cF^Y_t]=\sgn(W_{g_t})\sqrt{\frac{\pi}{2}}\bbE[\sqrt{g_t-\gamma_{g_t}}]=\sgn(W_{g_t})\sqrt{g_t}
 \]
 since $g_t$ has the arcsine law.
 \end{proof}
The result above means that $\hat{\nu}$ is a pure jump martingale which is constant
 on $[g_t,t]$. Therefore, it is a martingale which can jump only at the
 end of the excursion interval $(g_t(Y),d_t(Y)]$! Also observe that it is equally likely that this martingale will jump or stay constant when the excursion of $Y$ away from $0$ comes to an end.  The presence of a martingale
 with jumps in particular implies that the optional and predictable
 $\sigma$-algebras associated to $\cF^Y$ are different. Recall, however, that
 the martingales adapted to the filtration of the filtered Az\'ema
 martingale of the first kind is continuous implying the equivalence of
 the associated predictable and optional $\sigma$-algebras.

We can also find the $\cF^Y_t$-conditional law of $W_t$ as a straightforward corollary to  Proposition 4 in \cite{AY} and the independence of $\gamma_{g_t(Y)}$ from $\cF^Y_t$ as observed in the proof above.
\begin{corollary} Let $F:\bbR\mapsto\bbR$ be a bounded measurable function. Fix a $t>0$ and define $f:[0,t]\times \bbR\mapsto \bbR$ by $f(s,x)=\int_{\bbR}F(y)p(t-s,y-x)\,dy$ where $p$ is the transition density of standard Brownian motion. Let
\[
h(s,x)=\int_0^{s}f(s,x \sqrt{s-r})\frac{1}{\sqrt{\pi}}\frac{1}{\sqrt{s-r}\sqrt{r}}\,dr.
\]
Then,
\[
\bbE[F(W_t)|\cF^Y_t]=\int_0^{\infty} h\left(g_t(Y), \sgn(W_{g_t(Y)})\frac{\pi}{2}y\right)ye^{-\frac{y^2}{2}}\,dy.
\]
%\[
% g(s,x)=\int_0^{\infty}f(s,x y)ye^{-\frac{y^2}{2}}dy.
%\]
%Then, $\bbE[F(W_t)|\cF^Y_t]=h(g_t(Y),\hat{\nu}_t)$, where $h:[0,t]\times \bbR\mapsto \bbR$ is defined by

\end{corollary}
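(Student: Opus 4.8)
The plan is to reduce the computation of $\bbE[F(W_t)\mid \cF^Y_t]$ to the corresponding computation on the Az\'ema filtration $\cG$, exactly as in the proof of Corollary~\ref{c:nhat}. First I would write $\bbE[F(W_t)\mid \cF^Y_t] = \bbE\big[\bbE[F(W_t)\mid \cG_{g_t(Y)} \vee \sigma(g_t(Y))]\,\big|\,\cF^Y_t\big]$, using that $\cF^Y_t \subset \cF^X_t \vee \cG_{g_t(Y)}$ together with the independence of the skew Brownian motion $X$ (hence of $(g_s)_{s\le t}$) from $W$ established in Theorem~\ref{t:strongY}. On the event that we are inside an excursion interval, the future of $W$ after $g_t(Y)$ splits off: conditionally on $g_t(Y)=s$ and on $\sgn(W_s)$, the process $(W_{s+r})_{r\ge 0}$ is a Brownian motion started at $0$ whose sign on a small right-neighbourhood of $s$ equals $\sgn(W_s)$, and by the Markov property $W_t$ for $t>s$ is obtained by running this for time $t-s$. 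This is precisely the content of Proposition~4 in \cite{AY}, which describes the $\cG$-conditional law of a functional of $W$ in terms of the last exit time; the function $f(s,x)=\int_\bbR F(y)p(t-s,y-x)\,dy$ is the value, at the point $x$ and time $s$, of the harmonic extension of $F$ with respect to the Brownian semigroup over the remaining time $t-s$.

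Next I would apply Proposition~4 of \cite{AY} to identify $\bbE[f(g_t(Y), W_{g_t(Y)})\mid \cG_{g_t(Y)}\vee\sigma(g_t(Y))]$; up to the law of $W$ on the first excursion straddling time $s$, this introduces the normalised excursion: conditionally on $g_t(Y)=s$, the value $|W|$ just after $s$ behaves, after the Az\'ema normalisation, like $\sqrt{s-r}\,$ integrated against the density $\frac{1}{\sqrt{\pi}\sqrt{s-r}\sqrt r}\,dr$ of the pair (meander length, last exit) — which is exactly the kernel defining $h(s,x)$. Concretely, $h(s,x)$ is the $\cG_s$-conditional expectation of $f(s,W_s)$ when $W_s$ has sign encoded by $x$; the $\frac{1}{\sqrt\pi}(s-r)^{-1/2}r^{-1/2}$ weight is the arcsine-type density for $\gamma_s$ given $g_t(Y)=s$, which by the remarks in the proof of Corollary~\ref{c:nhat} is independent of $\cF^Y_t$. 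Then I would take the outer conditional expectation with respect to $\cF^Y_t$: since $\sgn(W_{g_t(Y)})$ is $\cF^Y_t$-measurable by Proposition~\ref{p:sgnisknown}, and $g_t(Y)$ is $\cF^Y_t$-measurable, only the law of $\gamma_{g_t(Y)}$ — equivalently the meander variable — remains to be integrated, and it is independent of $\cF^Y_t$. Finally I would rewrite the remaining one-dimensional integral: parametrising the residual excursion height by $y$, with the standard fact that the Brownian meander endpoint (suitably scaled by $\frac{\pi}{2}$ to match \eqref{e:am}) has density $y e^{-y^2/2}\,dy$ on $(0,\infty)$, the Az\'ema sign being $\sgn(W_{g_t(Y)})$, we arrive at
\[
\bbE[F(W_t)\mid \cF^Y_t]=\int_0^\infty h\Big(g_t(Y),\,\sgn(W_{g_t(Y)})\tfrac{\pi}{2}y\Big)\,y e^{-y^2/2}\,dy.
\]

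The main obstacle is bookkeeping rather than a genuinely new idea: one must carefully justify that conditioning on $\cF^Y_t$ factors through $\cG_{g_t(Y)}\vee\sigma(g_t(Y))$ and that the two ``extra'' randomnesses in the $\cG$-conditional law of $W_t$ — namely $\gamma_{g_t(Y)}$ and the normalised excursion/meander value — are jointly independent of $\cF^Y_t$. The independence of $(g_s)$ from $W$ (Theorem~\ref{t:strongY}) is what makes this work, and it is exactly the input that was already exploited in Corollary~\ref{c:nhat}; here one additionally needs that Proposition~4 of \cite{AY} is stated for precisely the functional $s\mapsto f(s,\cdot)$ of the post-$g$ Brownian motion, so the only care required is matching the normalisation constants ($\pi/2$ versus $\sqrt{2/\pi}$, and the meander density $y e^{-y^2/2}$) against the conventions in \eqref{e:am}. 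I expect no difficulty beyond this constant-matching and a routine Fubini argument to interchange the $dr$, $dy$ and conditional-expectation integrals.
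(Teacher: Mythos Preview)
Your approach is essentially the one the paper intends: the paper does not give a proof but simply says the result is ``a straightforward corollary to Proposition~4 in \cite{AY} and the independence of $\gamma_{g_t(Y)}$ from $\cF^Y_t$ as observed in the proof above,'' and your sketch unpacks exactly these two ingredients---first reducing to the Az\'ema filtration via the independence of $X$ from $W$ (Theorem~\ref{t:strongY}), then invoking Proposition~4 of \cite{AY} together with the independence of $\gamma_{g_t(Y)}$ established in Corollary~\ref{c:nhat}. One small point of care: in your first displayed tower identity you should condition on $\cF^X_t\vee\cG_{g_t(Y)}$ (which contains $\cF^Y_t$) rather than directly on $\cG_{g_t(Y)}\vee\sigma(g_t(Y))$; the reduction to the latter then follows from the independence of $X$ and $W$, exactly as in the proof of Corollary~\ref{c:nhat}.
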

% \begin{proof} Since $\bbE[F(W_t)|\cF^Y_t]=\bbE[f(g_t(Y), W_{g_t(Y)})|\cF^Y_t]$, the claim directly follows from Proposition 4 in \cite{AY}.
% \end{proof}

\end{document}